\def\mono#1#2{\filldraw (#1+.5,#2+.5) circle (2pt);}
\def\hdom#1#2{\filldraw[thick] (#1+.5,#2+.5) circle (2pt) --(#1+1.5,#2+.5) circle (2pt);}
\def\htri#1#2{\filldraw[thick](#1+.5,#2+.5) circle (2pt)--(#1+2.5,#2+.5) circle (2pt);}
\def\hquad#1#2{\filldraw[thick](#1+.5,#2+.5) circle (2pt)--(#1+3.5,#2+.5) circle (2pt);}
\def\hhex#1#2{\filldraw[thick](#1+.5,#2+.5) circle (2pt)--(#1+5.5,#2+.5) circle (2pt);}
\def\htwelve#1#2{\filldraw[thick](#1+.5,#2+.5) circle (2pt)--(#1+11.5,#2+.5) circle (2pt);}
\def\s{s_1}
\def\t{s_2}
\def\hsk{\hskip .2truein}
\def\subn{_r}
\def\subeight{_{8}}
\def\subthree{_{3}}
\def\suboneell{_{\mathcal{R}}}
\def\Alpha{s_r}
\def\Beta{s_{2r}}
\newcommand\bimn{{\arraycolsep=1.4pt\left\{\begin{array}{c}m\\n\end{array}\right\}\subn}}
\def\CombAB{C\subn(m,n)}
\def\Combmn{C_\mathcal{L}(m,n)}
\def\CatAB{\Cat_{r}(m)}
\def\rmT{\textrm{T}}
\def\ZS{\mathbb{Z}[S]}
\def\ello{\ell}
\def\kappan{\eta}
\def\etak{\rho}
\def\omega{\wt}
\def\Z{\mathbb{Z}}
\def\bim#1#2{{\arraycolsep=1.4pt\left\{\begin{array}{c}#1\\#2\end{array}\right\}}}
\theoremstyle{plain}
\newtheorem{theorem}{Theorem}
\newtheorem{lemma}{Lemma}
\newtheorem{Example}{Example}
\theoremstyle{definition}
\theoremstyle{remark}
\newcommand{\ds}{\displaystyle}
\DeclareMathOperator{\wt}{wt}
\DeclareMathOperator{\Cat}{Cat}
\DeclareMathOperator{\ddiv}{\textrm{div}}
\title{Multivariable Lucas Polynomials and Lucanomials}
\author{Edward E. Allen}
\author{Katherine Riley}
\author{Michael Weselcouch}
\affil{Department of Mathematics and Statistics, Wake Forest University, Winston-Salem, NC 27109}
\date{\today}
\begin{document}

\maketitle

\begin{abstract}
 \emph{Lucas polynomials} are polynomials in $\s$ and $\t$ defined recursively by $\{0\}=0$, $\{1\}=1$, and $\{m\}=\s\{m-1\}+\t\{m-2\}$ for $m \geq 2$.  
We generalize Lucas polynomials from 2-variable polynomials to multivariable polynomials.
This is done by first
defining $r$-Lucas polynomials $\{m\}\subn$ in the variables $s_1$, $s_r$, and $s_{2r}$.  We show that the binomial analogues of the $r$-Lucas polynomials are polynomial and give a combinatorial interpretation for them.  
We then extend the generalization of Lucas polynomials to an arbitrarily large set of variables.
Recursively defined generating functions are given for these multivariable Lucas polynomials.
We conclude by giving additional applications and insights.

\end{abstract}

\section{Introduction}

Let  $\mathbb{Z}[\s,\t]$ denote the set of polynomials in variables $\s$ and $\t$ with coefficients from the integers $\mathbb{Z}.$
Lucas recursively defined a set of polynomials in $\mathbb{Z}[\s,\t]$, referred to as \emph{Lucas polynomials}, by $\{0\}=0$, $\{1\}=1,$ and $\{m\}=\s\{m-1\}+\t\{m-2\}$
\cite{Lucas1878a,Lucas1878b,Lucas1878c}.
These polynomials generalize many well-known important
mathematical and combinatorial numbers.
It is easy to see that the corresponding generating function of the Lucas polynomials is given by 
\begin{equation}
L(x)=\sum_{k=0}^\infty \{k\}x^k=\frac{x}{1-s_1x-s_2x^2}.\label{D:LucasGenFunct}\end{equation}
Thus, any theorem about the Lucas polynomials
proves a corresponding theorem about any series whose generating function is given in (\ref{D:LucasGenFunct}) for specific values of $\s$ and $\t$.
If we
set $\s=\t=1$, the Lucas polynomials  reduce to the Fibonnacci sequence.  
With $\s=1+q$ and $\t=-q$, the Lucas polynomials reduce to the $q$-analogue of $n$,  %
$\{n\}= [n]_q=1+q+q^2+q^3+\cdots+q^{n-1}.$
Setting $s_1 = 1$ and $s_2 = 2$ generates the Pell numbers \cite{OEIS3}.
When $s_1 = 2$ and $s_2 = -1$, the integers are generated.

Using the Lucas polynomials as substitutions
for integers in combinatorial identities
has led to surprising analogues.
For example, substituting the the Lucas polynomials
into the formula for binomial coefficients yields
the rational functions--called \emph{Lucanomials}-- 
\begin{equation*}
\Combmn=
\bim{m}{n}
=
\frac{\{m\}!}{\{m-n\}!\{n\}!},
\end{equation*}
where $\ds{\{m\}!= \{m\}\cdot\{m-1\}!}$ for $m \ge 1$ and $\{0\}! = 1$.
Several authors have given combinatorial interpretations to the Lucanomials \cite{Gessel85, Sagan2010, Sagan2018, Sagan2019} that show that they are  in fact polynomials.
In \cite{Sagan2018} a combinatorial interpretation using \textit{partial tiling paths} is given and the Catalan analogues of the
Lucas polynomials are shown to be polynomial.

In this paper, we generalize Lucanomials from polynomials in two variables to polynomials in many variables.
This is done by first
defining polynomials $\{m\}\subn$ in terms of 
certain tiling words
in Section 2.
These polynomials have variables $s_1$, $\Alpha$, and $\Beta$ where $s_1$, $s_r$, and $s_{2r}$ correspond to tiles of length $1$, $r$, and $2r$, respectively.
We define rational functions 
\begin{equation}\CombAB=\bimn=\frac{\{m\}\subn!}
{\{m-n\}\subn! \{n\}\subn!}
\end{equation}
analogously to binomials.
The proof that $\CombAB$ is polynomial
is given in Section 4 by constructing a bijection $s_1^{\gamma}\cdot\phi_r$, for an appropriate value of $\gamma$,
that maps $\mathbb{Z}[\s,\t]$ into $\Z[s_1,s_2,\dots,]$. 
Essentially, each $\CombAB$ is the image of a product of $r$ Lucanomials under the map $s_1^{\gamma}\cdot\phi_r$.
In Section 5, we give a combinatorial interpretation of the polynomial $\CombAB$.
In Section 6, we give simple conditions  for when the Catalan analogues
\begin{equation}
\CatAB=\frac{1}{\{m+1\}\subn}\bim{2m}{m}\subn
\end{equation}
are polynomial.
In Section \ref{S:ToInfinity}, we show how to generalize Lucanomials to
an arbitrary set of variables $S=\{s_1, s_2, \dots\}$
and give conditions under which these are polynomial.
We also give recursively-defined generating functions for these new polynomials enabling anybody with a computer algebra system to explore this space using the analogous Taylor series command.
Finally, in Section 8, we give some additional applications and insights into these polynomials.
We should note that while the techniques described in this paper are applied to Lucas polynomials, they are applicable to any set of polynomials whose binomial analogues are polynomial.

\section{Tilings}

Set
$S=\{s_1,s_2, \dots\}$ and
$\ZS=\mathbb{Z}[s_1,s_2, \dots].$
For the purposes of this paper, we assume that $r$ is a positive integer.
Let $\tau=\{\tau_1,\tau_2,\dots\}$ be a collection of tiles
in which $\tau_i$ has length $i$.
A \emph{tiling word of $k$} 
is a word 
$\rmT=\tau_{i_1}\tau_{i_2}\cdots\tau_{i_j}$ 
consisting of an ordered sequence of entries from $\tau$ such that 
$k=i_1+i_2+\cdots+i_j.$
For example, line (\ref{F:gentiling}) gives a tiling $\rmT$ of $10$ cells
given by $\rmT=\tau_2\tau_3\tau_1^2\tau_3.$
\begin{equation}
\begin{tikzpicture}[scale=0.3, every node/.style={scale=0.7}]
\draw[-](0,0) rectangle  (1,1); 
\draw[-](1,0) rectangle  (2,1); 
\draw[-](2,0) rectangle  (3,1); 
\draw[-](3,0) rectangle  (4,1); 
\draw[-](4,0) rectangle  (5,1); 
\draw[-](5,0) rectangle  (6,1); 
\draw[-](6,0) rectangle  (7,1); 
\draw[-](7,0) rectangle  (8,1); 
\draw[-](8,0) rectangle  (9,1); 
\draw[-](9,0) rectangle  (10,1); 
\hdom{0}{0}
\htri{2}{0}
\mono{5}{0}
\mono{6}{0}
\htri{7}{0}
\node[draw=none] at (0,-0.5) {0};
\node[draw=none] at (1,-0.5) {1};
\node[draw=none] at (2,-0.5) {2};
\node[draw=none] at (3,-0.5) {3};
\node[draw=none] at (4,-0.5) {4};
\node[draw=none] at (5,-0.5) {5};
\node[draw=none] at (6,-0.5) {6};
\node[draw=none] at (7,-0.5) {7};
\node[draw=none] at (8,-0.5) {8};
\node[draw=none] at (9,-0.5) {9};
\node[draw=none] at (10,-0.5) {10};
\end{tikzpicture}
\label{F:gentiling}
\end{equation}
Define the \emph{weight of $T$} by
$$\omega(\rmT)=\prod_{\tau_i\in T} s_i^{\hbox{\# of tiles of length $i$}}.
$$
Therefore, with $\rmT=\tau_2\tau_3\tau_1^2\tau_3$ as in line (\ref{F:gentiling}), we have $\omega(\rmT)=s_1^2s_2s_3^2$.
While the variables in $S$ are commutative, the variables in $\tau$ are not.

Recall that the Lucas polynomials are polynomials in $\mathbb{Z}[\s,\t]$
 defined by
$\{0\}=0$, $\{1\}=1$ and $\{m\}=s_1\cdot\{m-1\}+s_2\cdot\{m-2\}.$
It is not difficult to show that
\[\{m+1\}=\sum_{\rmT\in\Delta_{m}}\omega(T),\]
where $\Delta_{m}$ is the collection of tiling words of $m$
with monominoes ($\tau_1$'s) and dominoes ($\tau_2$'s).
Lucas polynomials are known to satisfy the recurrence relation \cite{Sagan2010}
\begin{equation}
\{m\}=\{k\}\cdot\{m-k+1\}+s_2\cdot\{k-1\}\cdot\{m-k\}.
\end{equation}
Define the Lucanomial
\begin{equation}
\bim{m}{n} = \frac{\{m\}!}{\{m -n\}!\{n\}!}.
\end{equation}
Thus, the Lucanomials have the recurrence relation
\begin{equation}
\bim{m}{n}=\{m-n+1\}\cdot\bim{m-1}{n-1}+s_2\cdot\{n-1\}\cdot\bim{m-1}{n}.
\end{equation}

\begin{Example}
With $\bim{m}{1}=\{m\}$,
 it is not hard to see that
$\bim{3}{2}=s_1^2+s_2$
and
$\bim{4}{2}=s_1^4+3s_1^2s_2+2s_2^2$.
\end{Example}

Let $\Delta_{m,r}$ be the collection of tiling words of $m$
that begin with $m \pmod{r}$ $\tau_1$ tiles
and then uses tiles from $\{\tau_r,\tau_{2r}\}$
indiscriminately. Note that our use of $\pmod{r}$ is as a function with codomain $\{0, 1, \dots,  r-1\}$, not as a relation.
Set the \emph{$r$-Lucas polynomial} $\{m\}\subn$ to be
\begin{equation}
\{m+1\}\subn=\sum_{\rmT\in \Delta_{m,r}}\omega(\rmT),
\end{equation}
with $\{0\}\subn=0$ and $\{1\}\subn=1$.  

\begin{Example}
Computationally, with $r=3$ and $m=11$,
\begin{equation}
\{
11\}_3
=
s_1s_3^3+2s_1s_3s_6.\notag
\end{equation}
This correspond to the following tilings:

    \centering
\begin{tikzpicture}[scale=0.3, every node/.style={scale=0.5}]
\draw[-](0,0) rectangle  (1,1); 
\draw[-](1,0) rectangle  (2,1); 
\draw[-](2,0) rectangle  (3,1); 
\draw[-](3,0) rectangle  (4,1); 
\draw[-](4,0) rectangle  (5,1); 
\draw[-](5,0) rectangle  (6,1); 
\draw[-](6,0) rectangle  (7,1); 
\draw[-](7,0) rectangle  (8,1); 
\draw[-](8,0) rectangle  (9,1); 
\draw[-](9,0) rectangle  (10,1); 
\mono{0}{0}
\htri{1}{0}
\htri{4}{0}
\htri{7}{0}
\end{tikzpicture}
\hsk
\begin{tikzpicture}[scale=0.3, every node/.style={scale=0.5}]
\draw[-](0,0) rectangle  (1,1); 
\draw[-](1,0) rectangle  (2,1); 
\draw[-](2,0) rectangle  (3,1); 
\draw[-](3,0) rectangle  (4,1); 
\draw[-](4,0) rectangle  (5,1); 
\draw[-](5,0) rectangle  (6,1); 
\draw[-](6,0) rectangle  (7,1); 
\draw[-](7,0) rectangle  (8,1); 
\draw[-](8,0) rectangle  (9,1); 
\draw[-](9,0) rectangle  (10,1); 
\mono{0}{0}
\hhex{1}{0}
\htri{7}{0}
\end{tikzpicture}
\hsk
\begin{tikzpicture}[scale=0.3, every node/.style={scale=0.5}]
\draw[-](0,0) rectangle  (1,1); 
\draw[-](1,0) rectangle  (2,1); 
\draw[-](2,0) rectangle  (3,1); 
\draw[-](3,0) rectangle  (4,1); 
\draw[-](4,0) rectangle  (5,1); 
\draw[-](5,0) rectangle  (6,1); 
\draw[-](6,0) rectangle  (7,1); 
\draw[-](7,0) rectangle  (8,1); 
\draw[-](8,0) rectangle  (9,1); 
\draw[-](9,0) rectangle  (10,1); 
\mono{0}{0}
\htri{1}{0}
\hhex{4}{0}
\end{tikzpicture}
\end{Example}

The $r$-Lucas polynomials satisfy a similar recurrence relation to that of the Lucas polynomials.

\begin{lemma}
We have
$\{0\}\subn=0$, $\{i\}\subn=s_1^{i-1}$ for $1 \le i \le r$ and for $m > r$,
\begin{equation}
\{m\}\subn=\{m-r\}\subn\cdot  \Alpha
+\{m-2r\}\subn\cdot \Beta.
\end{equation}
Furthermore, the generating function for $\{m\}\subn$ is given by
\begin{equation}
 L_r(x)=\frac{x+s_1^1 x^2 + \cdots +s_1^{r-1} x^{r}}{1-\Alpha x^r - \Beta x^{2r}}.  
 \label{E:Recursion}
\end{equation}
\label{T:Recursion}
\end{lemma}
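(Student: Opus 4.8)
The plan is to read everything off the combinatorial definition $\{m+1\}\subn=\sum_{\rmT\in\Delta_{m,r}}\omega(\rmT)$: first establish the initial values and the recurrence by inspecting tilings, then deduce the generating function by a routine coefficient comparison. Throughout I adopt the convention that $\Delta_{j,r}=\emptyset$ when $j<0$, so that $\{k\}\subn=0$ for every $k\le 0$; this is forced by the fact that one cannot tile a negative number of cells, and it is what makes the recurrence's $\{m-2r\}\subn$ term meaningful for small $m$.

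First I would dispose of the initial values. For $1\le i\le r$ we have $0\le i-1\le r-1$, so $(i-1)\bmod r=i-1$ and every tiling in $\Delta_{i-1,r}$ must begin with $i-1$ copies of $\tau_1$. Since these already exhaust all $i-1$ cells and no tile of length $r$ or $2r$ can fit, $\Delta_{i-1,r}$ consists of the single word $\tau_1^{\,i-1}$, of weight $s_1^{i-1}$. Hence $\{i\}\subn=s_1^{i-1}$, which in particular recovers $\{1\}\subn=1$; and $\{0\}\subn=0$ by definition.

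Next comes the recurrence, the combinatorial heart of the argument. Fix $m>r$, so that $m-1\ge r$ and, after the initial block of $(m-1)\bmod r$ monominoes, each tiling in $\Delta_{m-1,r}$ contains at least one tile from $\{\tau_r,\tau_{2r}\}$. I would partition $\Delta_{m-1,r}$ according to its last such tile: removing a terminal $\tau_r$ leaves a tiling of $m-1-r$ cells, and removing a terminal $\tau_{2r}$ leaves a tiling of $m-1-2r$ cells. The key observation is that since $r$ and $2r$ are multiples of $r$, the residue of the length modulo $r$ is unchanged, so the truncated tilings lie exactly in $\Delta_{m-1-r,r}$ and $\Delta_{m-1-2r,r}$ respectively, and appending the removed tile recovers the original; this gives a weight-preserving bijection of each class. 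Because $\omega$ is multiplicative, summing the two disjoint classes yields
\[
\{m\}\subn=\Alpha\sum_{\rmT'\in\Delta_{m-1-r,r}}\omega(\rmT')+\Beta\sum_{\rmT''\in\Delta_{m-1-2r,r}}\omega(\rmT'')=\{m-r\}\subn\cdot\Alpha+\{m-2r\}\subn\cdot\Beta,
\]
which is the claimed recurrence.

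Finally, for the generating function I would set $N(x)=x+s_1x^2+\cdots+s_1^{r-1}x^r$ and $D(x)=1-\Alpha x^r-\Beta x^{2r}$ and verify $D(x)L_r(x)=N(x)$ by reading off the coefficient of $x^m$, namely $\{m\}\subn-\Alpha\{m-r\}\subn-\Beta\{m-2r\}\subn$. For $m>r$ this vanishes by the recurrence; for $1\le m\le r$ both subtracted terms vanish (their indices are $\le 0$) and the coefficient is $\{m\}\subn=s_1^{m-1}$, matching $N(x)$; and the constant term is $\{0\}\subn=0$. I expect the only real obstacle to be boundary bookkeeping: confirming that the residue-preservation observation is applied correctly and that the $\{m-2r\}\subn$ term behaves as required in the range $r<m\le 2r$, where $m-2r\le 0$ and the $\Beta$-contribution must (and does) disappear. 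No deeper difficulty is anticipated.
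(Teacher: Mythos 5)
Your proof is correct and follows essentially the same route as the paper: classify each tiling in $\Delta_{m-1,r}$ by whether its final tile is $\tau_r$ or $\tau_{2r}$ to get the recurrence, then derive the generating function from it. You simply supply the boundary bookkeeping (the initial values, the convention $\{k\}\subn=0$ for $k\le 0$, and the explicit coefficient comparison) that the paper leaves implicit.
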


\begin{proof}
Let $\rmT\in \Delta_{m-1, r}$ where $m>r.$ It is either the case that $\rmT$ ends with the tile $\tau_r$ or the tile $\tau_{2r}.$
The collection of all words fulfilling the first case is enumerated by $\{m-r\}\subn \cdot s_r$ and the second by $\{m-2r\}\subn \cdot s_{2r}$.
The recursion immediately implies the generating function.
\end{proof}

From the definition of $\{m\}\subn$, one can see that $\{m\}\subn \in \ZS$.
Define the \emph{$r$-Lucanomial} by
\begin{equation}\CombAB=\bimn=\frac{\{m\}\subn!}
{\{m-n\}\subn! \{n\}\subn!}.
\end{equation}
We will show that these are in fact always polynomials.  It is these polynomials--and their generalizations--that
will be the focus of the remainder of paper.

\section{A useful formula}

With $0 \leq n \leq m$, define $\gamma_{r}(m,n)$ by
\[
\gamma_{r}(m,n)
=\sum_{i=1}^m ((i-1)\hskip -6pt\pmod{r})-\sum_{i=1}^n ((i-1)\hskip -6pt\pmod{r})
-\sum_{i=1}^{m-n} ((i-1)\hskip -6pt\pmod{r}).
\]
Let $a$ and $h$ be defined by $a=n\pmod{r}$ and $h=(m-n)\pmod{r}$.  It is not difficult to show that
\begin{align}
\gamma_{r}(m,n)
=
 &\begin{cases}
ah&\hbox{if $0\le h\le r-a$,}\\
(r-a)(r-h)&\hbox{if $r-a<h< r.$}
 \end{cases}   
 \label{E:Gamma}
\end{align}
 
\begin{Example}
With $r=11$, $m=37$, and $n=20$ we have $a=9$, $h=6$ and
 $$\gamma_{11}(37,20)=171-91-70=10=(11-9)(11-6).$$
\end{Example}

\section{The function $s_1^{\gamma}\cdot\phi_r$}

Construct the homomorphism $\phi_r:\mathbb{Z}[\s,\t]\rightarrow \ZS$ 
by multiplicatively and linearly extending 
\begin{equation}
\phi_r(x)=\begin{cases}
1&\hbox{ if $x=1$},\\
s_r&\hbox{ if $x=\s$},\\
s_{2r}&\hbox{ if $x=\t$}.
\end{cases}
\end{equation}
Since the map $\phi_r$ substitutes one set of variables with another we have the following lemma:
\begin{lemma}
Let $P(\s,\t),Q(\s,\t)\in \mathbb{Z}[\s,\t].$ 
If $$\frac{P(\s,\t)}{Q(\s,\t)}\in \mathbb{Z}[\s,\t],$$
then
\begin{equation*}
\phi_r\Biggl(\frac{P(\s,\t)}{Q(\s,\t)}\Biggr)=\frac{\phi_r\Bigl(P(\s,\t)\Bigr)}{\phi_r\Bigl(Q(\s,\t)\Bigr)}=\frac{P(s_r, s_{2r})}{Q(s_r, s_{2r})}\in \ZS.
\end{equation*}
\label{T:IsPoly}
\end{lemma}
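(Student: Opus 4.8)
The plan is to leverage the fact that $\phi_r$ is a ring homomorphism together with the hypothesis, which upgrades the quotient $P/Q$ from a formal fraction to a genuine polynomial identity. First I would set $R = P(\s,\t)/Q(\s,\t)$, so that by hypothesis $R \in \mathbb{Z}[\s,\t]$ and, clearing the denominator, the polynomial identity $P = Q\cdot R$ holds in $\mathbb{Z}[\s,\t]$. Since $\phi_r$ was defined by extending multiplicatively and linearly, it respects products, so applying it to this identity gives
\[
\phi_r\bigl(P(\s,\t)\bigr)=\phi_r\bigl(Q(\s,\t)\bigr)\cdot\phi_r(R).
\]

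The one point that must be checked before dividing is that $\phi_r\bigl(Q(\s,\t)\bigr)\neq 0$ in $\ZS$. Because $P/Q$ is assumed well-defined we have $Q\neq 0$, so it suffices to know that $\phi_r$ is injective. This holds because $\phi_r$ sends the two algebraically independent generators $\s,\t$ to the two \emph{distinct} indeterminates $\Alpha,\Beta$ (distinct since $r\geq 1$ forces $r\neq 2r$); hence $\phi_r$ is an isomorphism of $\mathbb{Z}[\s,\t]$ onto the subring $\mathbb{Z}[\Alpha,\Beta]\subseteq\ZS$, and in particular it kills no nonzero polynomial. Thus $\phi_r\bigl(Q(\s,\t)\bigr)\neq 0$.

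Dividing the displayed identity by $\phi_r\bigl(Q(\s,\t)\bigr)$ in the fraction field of $\ZS$ then yields
\[
\phi_r(R)=\frac{\phi_r\bigl(P(\s,\t)\bigr)}{\phi_r\bigl(Q(\s,\t)\bigr)}.
\]
To finish, I would simply read off the definition of $\phi_r$ on monomials to identify $\phi_r\bigl(P(\s,\t)\bigr)=P(\Alpha,\Beta)$ and $\phi_r\bigl(Q(\s,\t)\bigr)=Q(\Alpha,\Beta)$, producing the full chain of equalities claimed and exhibiting the value as an element of $\ZS$.

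The only genuine subtlety — and the step I expect to carry the real content — is interpreting the symbol $\phi_r(P/Q)$ correctly and justifying the middle equality. Since $\phi_r$ is defined only on $\mathbb{Z}[\s,\t]$, the notation $\phi_r(P/Q)$ can only mean $\phi_r$ applied to the polynomial $R=P/Q$, and the substance of the lemma is precisely that this agrees with the formal quotient $\phi_r(P)/\phi_r(Q)$. Establishing injectivity of $\phi_r$ (equivalently, that $\phi_r(Q)\neq 0$) is exactly what licenses that agreement; once it is in hand, the rest follows mechanically from $\phi_r$ being a homomorphism.
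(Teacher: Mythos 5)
Your proof is correct and takes the same route the paper intends: the paper offers no formal proof, only the one-line remark that $\phi_r$ merely substitutes one set of variables for another, which is exactly the observation you make precise. Your elaboration --- writing $P=Q\cdot R$, noting that $\phi_r$ is an injective homomorphism onto $\mathbb{Z}[s_r,s_{2r}]$ because $s_r$ and $s_{2r}$ are distinct indeterminates, and then dividing --- correctly fills in the details the paper leaves implicit.
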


Set
\begin{equation}
\varepsilon_r(m)= ((m-1)\hskip -6pt\pmod{r}) +1.
    \label{E:varepsilon}
\end{equation}
Observe that $\varepsilon_r(m) = m \pmod{r}$ unless $m \pmod{r} = 0$.  In that case, $\varepsilon_r(m) = r$.
If $d = (m-1)(\ddiv\ r)$, then $\lceil \frac{m}{r}\rceil=d+1.$
This leads to the following.

\begin{lemma}
Suppose
$\varepsilon=\varepsilon_r(m)$
and $d=(m-1) (\ddiv\ r)$, then
\[
\{m\}\subn
=
\{\varepsilon\}\subn\cdot\phi_r\Bigl(\Bigl\{\frac{m-\varepsilon}{r}+1\Bigr\}\Bigr)
=
s_1^{\varepsilon-1}\cdot\phi_r(\{d+1\})
=
s_1^{\varepsilon-1}\cdot\phi_r\Bigr(\Bigl\{\Bigl\lceil\frac{m}{r}\Bigr\rceil\Bigr\}\Bigr)
\]
and
\begin{align}
\{m\}\subn\{m-r\}\subn\cdots\{\varepsilon\}\subn=
&s_1^{(\varepsilon-1)(d+1)}\cdot
\phi_r\Bigl(\{d+1\}!\Bigr)\notag\\
=&s_1^{(\varepsilon-1)(d+1)}\cdot
\phi_r\Bigl(\Bigl\{\Bigl\lceil\frac{m}{r}\Bigr\rceil\Bigr\}!\Bigr).\notag
\end{align}
\label{T:preimage}
\end{lemma}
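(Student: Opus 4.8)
The plan is to establish the single-term identity $\{m\}\subn = s_1^{\varepsilon-1}\phi_r(\{d+1\})$ first, since every other equality in the statement is a bookkeeping consequence of it. Writing $m = dr+\varepsilon$ with $1\le\varepsilon\le r$ (so that $d=(m-1)(\ddiv\ r)$ and $d+1=\lceil m/r\rceil$), the equality $\{\varepsilon\}\subn = s_1^{\varepsilon-1}$ is immediate from the base values $\{i\}\subn=s_1^{i-1}$ for $1\le i\le r$ in Lemma~\ref{T:Recursion}; the equality $(m-\varepsilon)/r+1 = d+1$ is arithmetic since $m-\varepsilon=dr$; and $\phi_r(\{d+1\})=\phi_r(\{\lceil m/r\rceil\})$ merely restates $d+1=\lceil m/r\rceil$. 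Thus the entire first display collapses to the one claim $\{m\}\subn = s_1^{\varepsilon-1}\phi_r(\{d+1\})$.

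I would prove that claim by induction on $m$, adopting the convention $\{k\}\subn=0$ for $k\le 0$ (consistent with $\{0\}\subn=0$ and with the emptiness of the relevant tiling sets). For the base range $1\le m\le r$ we have $d=0$, $\varepsilon=m$, and both sides equal $s_1^{m-1}$ because $\phi_r(\{1\})=1$. For $m>r$ the key observation is that the homomorphism $\phi_r$ carries the ordinary Lucas recurrence $\{d+1\}=s_1\{d\}+s_2\{d-1\}$ onto $\phi_r(\{d+1\})=s_r\,\phi_r(\{d\})+s_{2r}\,\phi_r(\{d-1\})$, which is exactly the shape of the $r$-Lucas recurrence of Lemma~\ref{T:Recursion}. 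Concretely, $m-r=(d-1)r+\varepsilon$ and $m-2r=(d-2)r+\varepsilon$ share the residue $\varepsilon$ and decrement the quotient by $1$ and $2$, so the inductive hypothesis gives $\{m-r\}\subn = s_1^{\varepsilon-1}\phi_r(\{d\})$ and $\{m-2r\}\subn = s_1^{\varepsilon-1}\phi_r(\{d-1\})$. Substituting these into $\{m\}\subn = \{m-r\}\subn\, s_r + \{m-2r\}\subn\, s_{2r}$, factoring out $s_1^{\varepsilon-1}$, and applying the $\phi_r$-image of the Lucas recurrence yields $\{m\}\subn = s_1^{\varepsilon-1}\phi_r(\{d+1\})$.

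With the single-term identity in hand, the product formula is a one-line consequence. The factors on the left are indexed by $m,m-r,\dots,\varepsilon$, that is, by $jr+\varepsilon$ for $j=0,1,\dots,d$: a total of $d+1$ terms all of residue $\varepsilon$ and quotient $j$. Hence $\{jr+\varepsilon\}\subn = s_1^{\varepsilon-1}\phi_r(\{j+1\})$, and multiplying over $j$ gives
\[
\prod_{j=0}^{d}\{jr+\varepsilon\}\subn = s_1^{(\varepsilon-1)(d+1)}\prod_{j=0}^{d}\phi_r(\{j+1\}) = s_1^{(\varepsilon-1)(d+1)}\,\phi_r\!\Bigl(\prod_{j=0}^{d}\{j+1\}\Bigr) = s_1^{(\varepsilon-1)(d+1)}\,\phi_r(\{d+1\}!),
\]
where pulling the product inside $\phi_r$ uses only that it is a ring homomorphism and that $\prod_{j=0}^{d}\{j+1\}=\{1\}\{2\}\cdots\{d+1\}=\{d+1\}!$. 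The final rewriting $\{d+1\}!=\{\lceil m/r\rceil\}!$ is once more just $d+1=\lceil m/r\rceil$.

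The only step demanding genuine care is the boundary of the induction: when $d=1$ (so $r<m\le 2r$) the index $m-2r$ is nonpositive, and I must confirm that Lemma~\ref{T:Recursion} is being applied with the convention $\{k\}\subn=0$ for $k\le 0$, matching $\phi_r(\{d-1\})=\phi_r(\{0\})=0$, so that the two recurrences stay aligned; for $d\ge 2$ the index $m-2r=(d-2)r+\varepsilon\ge 1$ is a genuine smaller positive value and the inductive hypothesis applies directly. Apart from this bookkeeping about nonpositive indices and the verification that subtracting $r$ preserves the residue $\varepsilon$ while decrementing the quotient, the argument is entirely mechanical; the sole conceptual ingredient is the remark that $\phi_r$ transports the $(s_1,s_2)$-Lucas recurrence onto the $(s_r,s_{2r})$-recurrence defining $\{m\}\subn$.
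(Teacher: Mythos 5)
Your proof is correct, but it takes a genuinely different route from the paper. The paper's argument is a direct combinatorial one: it reads off from the definition of $\Delta_{m-1,r}$ that every tiling word contributing to $\{m\}\subn$ begins with exactly $\varepsilon-1$ copies of $\tau_1$ (giving the factor $s_1^{\varepsilon-1}$), and that the remaining $d\cdot r$ cells, tiled by $\tau_r$ and $\tau_{2r}$, are in weight-preserving bijection (after the substitution $s_1\mapsto s_r$, $s_2\mapsto s_{2r}$) with tilings of $d$ cells by $\tau_1$ and $\tau_2$, whose weight sum is $\{d+1\}$. You instead run an induction on $m$ through the recurrence of Lemma \ref{T:Recursion}, using the observation that $\phi_r$ transports the ordinary Lucas recurrence onto the $(s_r,s_{2r})$-recurrence. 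Both are valid; the paper's bijective argument is shorter and avoids the boundary issue you rightly flag, namely that for $r<m<2r$ the recurrence invokes $\{m-2r\}\subn$ at a nonpositive index, which is only implicitly covered by the paper's statement of Lemma \ref{T:Recursion} (its proof silently drops the $\tau_{2r}$ case when that tile does not fit). Your explicit convention $\{k\}\subn=0$ for $k\le 0$ is exactly the right patch, and your handling of the product formula matches the paper's (both treat it as an immediate corollary of the single-term identity). If anything, your version makes the dependence on Lemma \ref{T:Recursion} explicit where the paper's relies directly on the tiling definition, so the paper's proof would survive even if the recurrence were stated only for $m>2r$.
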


\begin{proof}
Recall that $\{m\}\subn$ is determined by the tiling words of $m-1$ that begin with $(m-1) \pmod{r}$ $\tau_1$ tiles
and then uses tiles from $\{\tau_r,\tau_{2r}\}$
indiscriminately.
Both $\tau_r$ and $\tau_{2r}$ have tiles that are lengths that are multiples of $r$.
Thus, each tiling word involved in the computation of $\{m\}\subn$ starts with $(\varepsilon -1)$ $\tau_1$ tiles. 
The remaining $d\cdot r$ cells
are tiled by $\tau_r$ and $\tau_{2r}$, 
in the same way that $d$ cells can be tiled by $\tau_1$  and $\tau_2$.
\end{proof}

\noindent
This immediately yields the next result.
\begin{lemma}
We have 
$$\{m\}\subn!=s_1^{h_{r}(m)}\cdot \phi_r(R(\s,\t)),$$
where 
$$R(\s,\t)=\prod_{j=0}^{r-1} \Bigl\{\Bigl\lceil \frac{m-j}{r}\Bigr\rceil\Bigr\}!,$$
and 
$$h_{r}(m)=\sum_{i=1}^{m}( (i-1)\hskip -6pt\pmod{r}).
$$
\label{T:applyphi}
\end{lemma}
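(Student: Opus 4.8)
The plan is to prove the factorization of $\{m\}\subn!$ by expressing the factorial as an ordered product over congruence classes modulo $r$ and then applying Lemma~\ref{T:preimage} to each class. First I would write
\[
\{m\}\subn! = \prod_{k=1}^{m}\{k\}\subn,
\]
and partition the index set $\{1,2,\dots,m\}$ according to the residue $k \pmod r$. Each residue class, read in decreasing order, gives exactly a chain of the form $\{m'\}\subn\{m'-r\}\subn\cdots\{\varepsilon'\}\subn$ where $m'$ is the largest index in $\{1,\dots,m\}$ congruent to that residue and $\varepsilon'=\varepsilon_r(m')$ is its smallest. There are $r$ such chains (one for each residue $0,1,\dots,r-1$), and the chain associated with residue $j$ terminates with the factor $\{j'\}\subn$ where $j'$ is the smallest positive integer in that class. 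The key bookkeeping observation is that the largest index congruent to the residue of $m-j$ is itself $m-j$ for $j=0,1,\dots,r-1$, so the $r$ chains are naturally indexed by $j \in \{0,\dots,r-1\}$ via their top element $m-j$.

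Next I would apply the second displayed equation of Lemma~\ref{T:preimage} to each of the $r$ chains. For the chain topped by $m-j$, that lemma gives
\[
\{m-j\}\subn\{m-j-r\}\subn\cdots
=
s_1^{(\varepsilon_r(m-j)-1)(d_j+1)}\cdot
\phi_r\Bigl(\Bigl\{\Bigl\lceil\tfrac{m-j}{r}\Bigr\rceil\Bigr\}!\Bigr),
\]
where $d_j = (m-j-1)(\ddiv\ r)$. Multiplying these $r$ identities together and using that $\phi_r$ is a homomorphism, the product of the $\phi_r$ factors becomes $\phi_r\bigl(\prod_{j=0}^{r-1}\{\lceil (m-j)/r\rceil\}!\bigr) = \phi_r(R(\s,\t))$, which is exactly the claimed $R$. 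The only remaining task is to verify that the accumulated exponent of $s_1$ equals $h_r(m)$.

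For the exponent I would argue directly that the total power of $s_1$ coming from the $r$ chains must equal $\sum_{i=1}^m((i-1)\pmod r)$. The cleanest route is to note that $\{k\}\subn = s_1^{\varepsilon_r(k)-1}\cdot\phi_r(\cdots)$ contributes $s_1^{\varepsilon_r(k)-1} = s_1^{(k-1)\pmod r}$ to the product for each $k$, by the first display of Lemma~\ref{T:preimage} together with the definition of $\varepsilon_r$. Summing the exponent $(k-1)\pmod r$ over all $k$ from $1$ to $m$ gives precisely $h_r(m) = \sum_{i=1}^m((i-1)\pmod r)$, independent of how the factors are grouped into chains. This sidesteps having to reconcile the grouped exponents $\sum_{j=0}^{r-1}(\varepsilon_r(m-j)-1)(d_j+1)$ with $h_r(m)$ by a separate counting argument.

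The main obstacle I expect is the indexing and boundary bookkeeping: one must check that as $j$ ranges over $0,\dots,r-1$ the top elements $m-j$ hit each residue class exactly once, that the corresponding chains are disjoint and exhaust $\{1,\dots,m\}$, and that the chain lengths $d_j+1$ used in Lemma~\ref{T:preimage} are consistent with $\lceil (m-j)/r\rceil$. These are all elementary modular-arithmetic verifications, but getting the off-by-one relationships among $\varepsilon_r$, $\ddiv$, and the ceiling function exactly right is where care is needed. Once the partition into $r$ chains is established, the rest follows by a direct application of the previous lemma and the homomorphism property of $\phi_r$, so the statement follows immediately as the text asserts.
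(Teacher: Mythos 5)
Your proof is correct and is exactly the argument the paper intends: the paper states this lemma with no proof, remarking only that it follows immediately from Lemma~\ref{T:preimage}, and your decomposition of $\{m\}\subn!$ into the $r$ chains indexed by residue class, followed by the chain identity of that lemma and the factor-by-factor count of the $s_1$ exponent via $\varepsilon_r(k)-1=(k-1)\pmod{r}$, is precisely that immediate deduction, carefully written out.
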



It follows immediately from Lemma \ref{T:applyphi}, that the power of $s_1$ in $\CombAB$ is given by $\gamma_r(m, n)$.

\begin{theorem}
Suppose $m=n+k.$  Let $M$, $N$ and $K$ be the multisets
\[M=\Bigl\{\Bigl\lceil \frac{m}{r} \Bigr\rceil,\Bigl\lceil \frac{m-1}{r} \Bigr\rceil, \ldots, \Bigl\lceil \frac{m-r+1}{r} \Bigr\rceil\Bigr\},\]  
\[N=\Bigl\{\Bigl\lceil \frac{n}{r} \Bigr\rceil,\Bigl\lceil \frac{n-1}{r} \Bigr\rceil, \ldots, \Bigl\lceil \frac{n-r+1}{r} \Bigr\rceil\Bigr\},\]
and
\[K=\Bigl\{\Bigl\lceil \frac{k}{r} \Bigr\rceil,\Bigl\lceil \frac{k-1}{r} \Bigr\rceil, \ldots, \Bigl\lceil \frac{k-r+1}{r} \Bigr\rceil\Bigr\}.\]
Then there is some ordering 
$M^*=[M_1,M_2,\cdots, M_r]$,
$N^*=[N_1,N_2,\cdots, N_r]$, and
$K^*=[K_1,K_2,\cdots, K_r]$ 
of the elements of the multisets $M, N,$ and $K$, respectively,
 such that
$M_i=N_i+K_i$, for $1 \le i \le r.$
\label{T:Matching}
\end{theorem}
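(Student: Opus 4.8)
The plan is to compute each of the three multisets explicitly, observe that each one consists of only two consecutive integers with prescribed multiplicities, and then build the required matching by hand in the two arithmetic cases distinguished by whether the base-$r$ addition $n+k=m$ produces a carry.

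First I would record the shape of each multiset. Writing $m=q_mr+a_m$ with $a_m=m\pmod r$ and $q_m=\lfloor m/r\rfloor$, a short computation with the ceiling function shows that $\lceil(m-j)/r\rceil=q_m+1$ exactly when $0\le j\le a_m-1$, while $\lceil(m-j)/r\rceil=q_m$ for the remaining indices. Hence $M$ consists of $a_m$ copies of $q_m+1$ together with $r-a_m$ copies of $q_m$; I will call these the \emph{high} and \emph{low} elements of $M$. (The boundary case $a_m=0$ is covered automatically: then there are no high elements and all $r$ entries equal $q_m=\lceil m/r\rceil$.) The identical description applies to $N$ (parameters $q_n,a_n$) and to $K$ (parameters $q_k,a_k$), so each multiset is pinned down by a base value together with a count of incremented entries.

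Next I would split on the carry. From $m=n+k$ we get $q_mr+a_m=(q_n+q_k)r+(a_n+a_k)$, so either $a_n+a_k<r$, giving $q_m=q_n+q_k$ and $a_m=a_n+a_k$ (no carry), or $a_n+a_k\ge r$, giving $q_m=q_n+q_k+1$ and $a_m=a_n+a_k-r$ (carry). In the no-carry case the two values of $M$ are the low value $q_n+q_k$ and the high value $q_n+q_k+1$, so I would pair each high element of $N$ with a low element of $K$, and each high element of $K$ with a low element of $N$, producing $a_n+a_k=a_m$ sums equal to $q_m+1$; the leftover low-low pairs then produce the $r-a_m$ sums equal to $q_m$. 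The inequalities $a_n\le r-a_k$ and $a_k\le r-a_n$ (both equivalent to $a_n+a_k\le r$) guarantee there are enough low elements to absorb all the highs, and the two leftover low counts both equal $r-a_n-a_k$, so the pairing closes up. In the carry case the two values of $M$ are the low value $q_n+q_k+1$ and the high value $q_n+q_k+2$; now I would form $a_m$ high-high pairs to realize the sums $q_m+1$, and pair the remaining high elements of each of $N,K$ with the (equally numerous) low elements of the other to realize the sums $q_m=q_n+q_k+1$. The bookkeeping here is the mirror image of the previous case.

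The only genuine work is the multiplicity bookkeeping, namely verifying in each case that every high and low entry on the $N$ and $K$ sides is used exactly once; this collapses to the identities $r-a_n-a_k=r-a_m$ in the no-carry case and $2r-a_n-a_k=r-a_m$ in the carry case, both immediate from the definitions of the two cases. I expect this counting to be the main, though entirely routine, obstacle, and the ceiling computation giving the shape of each multiset to be the one place that needs a little care (the degenerate case $a=0$). Once the matching is established, it feeds directly into Lemma \ref{T:applyphi}: the products of factorials attached to $M$, $N$, and $K$ reorganize into a product of ordinary Lucanomials $\bim{M_i}{N_i}$, which is how $\CombAB$ is shown to be polynomial.
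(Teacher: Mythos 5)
Your proof is correct and follows essentially the same route as the paper: both arguments rest on the observation that each of $M$, $N$, $K$ consists of just two consecutive values, with multiplicities $a$ and $r-a$ determined by the residue mod $r$, and then match high entries against low entries according to whether $a_n+a_k$ produces a carry. The paper realizes the matching via an explicit three-row array (listing $N$ in increasing order starting at column $\mu+1$ with wrap-around) and checks five cases, whereas your two-case high/low bookkeeping is a cleaner packaging of the same idea.
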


\begin{proof}
Suppose $m=n+k$, $\mu = m\pmod{r}$,
$\alpha = n\pmod{r}$, and $\beta = k\pmod{r}$.
We break the proof of this theorem into cases dependent on whether or not the following are true: $\mu>0$ (if not $\mu=0$), $\alpha+\beta=\mu$ (if not, $\alpha+\beta=r+\mu$), and $\beta>0$ (if not $\beta=0$).
Of the eight cases, three contain contradictory conditions: 
$\mu=0$, $\alpha+\beta=\mu$, $\beta>0$;
$\mu=0$, $\alpha + \beta = r + \mu $, $\beta=0$; and 
$\mu>0$, $\alpha + \beta = r + \mu$, $\beta=0$.

We will create a 3-line array with entries of $M$ on the first row,
entries from $N$ on the second row, and entries from $K$ on the third
such that the entries in the second and third rows of a given column sum to the entry in the first row.  To do this, list the entries of $M$ and $K$ in weakly decreasing order from left to right.  In the second row, list the entries of $N$ in weakly increasing order starting in column $\mu+1$ and then continuing in the first column by wrapping around.

In Row 1, if $\mu=0$, then all of the entries are equal to $\frac{m}{r}.$
Otherwise, there are $\mu$ entries of $\frac{m-\mu+r}{r}$
and $r-\mu$ entries of $\frac{m-\mu}{r}.$
Recall that the entries in Row 2 are listed in increasing order starting in column $\mu+1.$
Thus, in Row 2, the first $r-\alpha$ entries (starting in column $\mu+1$) are $\frac{n-\alpha}{r}$
 followed by $\alpha$ entries of $\frac{n-\alpha+r}{r}$ (with wrap around).
In Row 3, the first $\beta$ entries are $\frac{k-\beta+r}{r}$
 followed by $r-\beta$ entries of $\frac{k-\beta}{r}$.
 In all five of the following cases, we will have $M_j=N_j+K_j$ for $1\le j \le r.$
 
\textbf{Case 1:} $\mu>0$, $\alpha+\beta=\mu$, $\beta>0$.

\noindent
Set $\theta=\frac{m-\mu+r}{r}$,
$\kappan=\frac{n-\alpha}{r}$, 
and 
$\etak=\frac{k-\beta+r}{r}.$
Note that $M_j=\theta$ for $1 \le j \le \mu$ and $M_j=\theta-1$ for $\mu+1 \le j \le r.$
Also, since $\beta=\mu-\alpha$, 
$N_j=\kappan-1$ for $1 \le j \le \beta$ and
$\mu+1 \le j \le r$ as well as
$N_j=\kappan$ for $\beta+1 \le j \le \mu$.
Finally,
$K_j=\etak$ for $1 \le j \le \beta$
and
$K_j=\etak-1$ for $\beta+1 \le j \le r.$

\textbf{Case 2:} $\mu>0$, $\alpha+\beta=r+\mu$, $\beta>0.$

\noindent
Set $\theta=\frac{m-\mu+r}{r}$,
$\kappan=\frac{n-\alpha+r}{r}$, 
and 
$\etak=\frac{k-\beta+r}{r}.$
Note that $M_j=\theta$ for $1 \le j \le \mu$ and $M_j=\theta-1$ for $\mu+1 \le j \le r.$
Also, since $\beta=\mu+(r-\alpha)$, we have
$N_j=\kappan$ for $1 \le j \le \mu$ and $\beta+1\le j \le r$
as well as $N_j=\kappan-1$ for 
$\mu+1 \le j \le \beta=\mu+r-\alpha$.
Finally,
$K_j=\etak$ for $1 \le j \le \beta$
and
$K_j=\etak-1$ for $\beta+1 \le j \le r.$

\textbf{Case 3:} $\mu=0$, $\alpha+\beta=r+\mu=r$, $\beta>0.$

\noindent
Set $\theta=\frac{m}{r}$,
$\kappan=\frac{n-\alpha+r}{r}$, 
and 
$\etak=\frac{k-\beta+r}{r}.$
Note that $M_j=\theta$ for $1 \le j \le r$.
Also, since $\beta=r-\alpha$, we have
$N_j=\kappan-1$ for $1 \le j \le \beta$ 
as well as $N_j=\kappan$ for 
$\beta+1 \le j \le r$.
Finally,
$K_j=\etak$ for $1 \le j \le \beta$
and
$K_j=\etak-1$ for $\beta+1 \le j \le r.$

\textbf{Case 4:} $\mu>0$, $\alpha=\alpha+\beta=\mu$, $\beta=0.$

\noindent
Set $\theta=\frac{m-\mu+r}{r}$,
$\kappan=\frac{n-\alpha+r}{r}$, 
and 
$\etak=\frac{k}{r}.$
Note that $M_j=\theta$ for $1 \le j \le \mu$ and $M_j=\theta-1$ for $\mu+1 \le j \le r.$
Also, since $\alpha=\mu$, we have
$N_j=\kappan$ for $1 \le j \le \mu$ 
as well as $N_j=\kappan-1$ for 
$\mu+1 \le j \le r$.
Finally,
$K_j=\etak$ for $1 \le j \le r$.

\textbf{Case 5:} $\mu=0$, $\alpha=0$, $\beta=0.$

\noindent
Set $\theta=\frac{m}{r}$,
$\kappan=\frac{n}{r}$, 
and 
$\etak=\frac{k}{r}.$
Note that $M_j=\theta$ for $1 \le j \le r$,
$N_j=\kappan$ for $1 \le j \le r$ 
and
$K_j=\etak$ for $1 \le j \le r$.
\end{proof}

Example \ref{Ex:prodofbims} illustrates the first case of the proof of Theorem \ref{T:Matching}.

\begin{Example}
Suppose $m=37$, $n=10$. $m-n=27$ and $r=8.$ It follows that
\[\{37\}_8!=s_1^{122}\cdot\phi_8\Bigl(\{5\}!\cdot \{5\}!\cdot\{5\}!\cdot\{5\}!\cdot\{5\}!\cdot\{4\}!\cdot\{4\}!\cdot\{4\}!\Bigr),\]
\[\{27\}_8!=s_1^{87}\cdot\phi_8\Bigl(\{4\}!\cdot\{4\}!\cdot\{4\}!\cdot\{3\}!\cdot\{3\}!\cdot\{3\}!\cdot\{3\}!\cdot\{3\}!\Bigr),\] and
\[\{10\}_8!=s_1^{29}\cdot\phi_8\Bigl(\{2\}!\cdot\{2\}!\cdot\{1\}!\cdot\{1\}!\cdot\{1\}!\cdot\{1\}!\cdot\{1\}!\cdot\{1\}!\Bigr).\]
Now, write $55555444$ decreasing, $44433333$ increasing starting in column $1+(37 \pmod{8})=6$ and then continuing with wrap-around
and then $22111111$ decreasing.
$$\begin{array}{cccccccc}
5&5&5&5&5&4&4&4\\
3&3&4&4&4&3&3&3\\
2&2&1&1&1&1&1&1 
\end{array}$$
The columns add up correctly so that we can now create
corresponding
product of binomials:
\[
\bim{37}{27}\subeight = s_1^6 \cdot
\phi_8\Biggl(
\bim{5}{3}^2\cdot
\bim{5}{4}^3\cdot
\bim{4}{3}^3\Biggr)
\]
The exponent of $s_1$ is given by (\ref{E:Gamma})
with $a=27\pmod{8}=3$, $h=(37-27)\pmod{8}=2$ and $\gamma_{8}(37,27)=3\cdot2=6.$ 
\label{Ex:prodofbims}
\end{Example}

The previous computation, 
in which
$\CombAB$
is written as the image of
a product of Lucas polynomials under the map $s_1^{\gamma_{8}(37,27)}\cdot\phi_8$, is an example of a more general theorem. 
\begin{theorem}
For any $\CombAB$, there exists a product \[R(\s,\t)=\prod_{i=1}^r \bim{a_i}{b_i}\]
of Lucanomials
such that 
\begin{equation}
\CombAB=\bimn=s_1^{\gamma_{r}(m,n)}\cdot\phi_r\Bigl(R(\s,\t)\Bigr)
\end{equation}
where $\gamma_{r}(m,n)$ is given in (\ref{E:Gamma}).
Therefore, $\CombAB$ is a polynomial.
\label{T:Big2Theorem}
\end{theorem}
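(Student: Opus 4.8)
The plan is to assemble $\CombAB$ out of the factorization of Lucas factorials given by Lemma \ref{T:applyphi}. First I would invoke that lemma three times, for $m$, for $n$, and for $k=m-n$, writing each $r$-Lucas factorial as a power of $s_1$ times $\phi_r$ of a product of ordinary Lucas factorials:
\[
\{m\}\subn! = s_1^{h_r(m)}\phi_r\Bigl(\prod_{j=0}^{r-1}\bigl\{\lceil\tfrac{m-j}{r}\rceil\bigr\}!\Bigr),
\]
and similarly for $\{n\}\subn!$ and $\{m-n\}\subn!$. Dividing as in the definition of $\CombAB$ factors the result as $s_1^{\,h_r(m)-h_r(n)-h_r(m-n)}$ times a quotient of $\phi_r$-images. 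The exponent is precisely $\gamma_r(m,n)$ straight from the definition in Section 3, which produces the advertised prefactor $s_1^{\gamma_r(m,n)}$.

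The remaining quotient of $\phi_r$-images is $\phi_r$ applied to $\prod_{M_i\in M}\{M_i\}!$, divided by $\phi_r$ applied to $\prod_{N_i\in N}\{N_i\}!$ and by $\phi_r$ applied to $\prod_{K_i\in K}\{K_i\}!$, where $M$, $N$, $K$ are the multisets of Theorem \ref{T:Matching}. That theorem supplies orderings $M^*,N^*,K^*$ with $M_i=N_i+K_i$ for every $i$, so I can match the three products factor by factor and regroup the whole quotient as $\prod_{i=1}^r \phi_r\bigl(\bim{M_i}{N_i}\bigr)$, since $\frac{\{M_i\}!}{\{N_i\}!\{K_i\}!}=\bim{M_i}{N_i}$.

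The content of the argument is justifying that this factorwise regrouping commutes with $\phi_r$. For each $i$ the ratio $\bim{M_i}{N_i}$ is an ordinary (two-variable) Lucanomial, and these are known to be polynomials \cite{Sagan2018}; hence Lemma \ref{T:IsPoly} applies to each factor and gives $\phi_r\bigl(\bim{M_i}{N_i}\bigr)=\phi_r(\{M_i\}!)\big/\bigl(\phi_r(\{N_i\}!)\,\phi_r(\{K_i\}!)\bigr)$. Multiplying these $r$ identities reconstitutes exactly the quotient of $\phi_r$-images above. Setting $R(\s,\t)=\prod_{i=1}^r \bim{M_i}{N_i}$, so that $a_i=M_i$ and $b_i=N_i$, yields $\CombAB=s_1^{\gamma_r(m,n)}\phi_r\bigl(R(\s,\t)\bigr)$. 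Because each factor of $R$ is a polynomial, $R\in\mathbb{Z}[\s,\t]$, its image $\phi_r(R)$ lies in $\ZS$, and multiplication by the monomial $s_1^{\gamma_r(m,n)}$ keeps it there, so $\CombAB$ is a polynomial.

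The main obstacle is exactly the legitimacy of applying Lemma \ref{T:IsPoly} one factor at a time: a priori one only knows that the full $r$-Lucanomial is a ratio of elements of $\ZS$, so the reduction to single Lucanomials relies essentially on the external fact that each $\bim{M_i}{N_i}$ is already a polynomial. By contrast, the combinatorial core — that $M$, $N$, $K$ can be matched columnwise with $M_i=N_i+K_i$ — is entirely handled by Theorem \ref{T:Matching}, and the exponent identity $h_r(m)-h_r(n)-h_r(m-n)=\gamma_r(m,n)$ is immediate, so no additional case analysis is required.
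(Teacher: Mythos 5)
Your proposal is correct and follows essentially the same route as the paper: factor each $r$-Lucas factorial via Lemma \ref{T:applyphi}, identify the $s_1$-exponent difference with $\gamma_r(m,n)$, regroup using the matching of Theorem \ref{T:Matching}, and conclude polynomiality from Lemma \ref{T:IsPoly} applied to each ordinary Lucanomial factor. You simply spell out the details (in particular, the reliance on the known polynomiality of two-variable Lucanomials to apply Lemma \ref{T:IsPoly} factorwise) that the paper's three-sentence proof leaves implicit.
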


\begin{proof}
The product of binomials $R(\s,\t)$ is constructed in Theorem \ref{T:Matching} and
$\gamma_{r}(m,n)$, as shown in (\ref{E:Gamma}), solely counts the difference between the exponent of $s_1$ in $\{m\}\subn!$ and the exponent of $s_1$ in $\{m-n\}\subn!\cdot \{n\}\subn!$.  
Lemma \ref{T:IsPoly} implies the polynomiality of the result.
\end{proof}

Theorem \ref{T:Matching} and Theorem \ref{T:Big2Theorem} give another method for computing $\CombAB$ as shown in the following example.

\begin{Example}

Using the recurrence relation in Lemma \ref{T:Recursion} with $r=3$,
computationally
\begin{equation}
\Bigl\{
{\arraycolsep=1.4pt
\begin{array}{c}
10\\ 5
\end{array}}
\Bigr\}\subthree
=s_1s_3^8+5s_1s_3^6s_6+9s_1s_3^4s_6^2+7s_1s_3^2s_6^3+2s_1s_6^4.
\label{Ex:tenfivefive}
\end{equation}
Now,
$$\{10\}\subthree!=s_1^{9}\cdot\phi_3\Bigl(\{4\}\cdot\{3\}\cdot\{3\}\cdot\{3\}\cdot\{2\}\cdot\{2\}\cdot\{2\}\cdot\{1\}\cdot\{1\}\cdot\{1\}\Bigr),$$
$$\{5\}_3!=s_1^{4}\cdot\phi_3\Bigl(\{2\}\cdot\{2\}\cdot\{1\}\cdot\{1\}\cdot\{1\}\Bigr),$$
and thus, using Theorem \ref{T:Matching}, gives
\begin{align}
\frac{\left\{10\right\}\subthree!}{\left\{5\right\}\subthree!\cdot\left\{5\right\}\subthree!}= &
\frac{s_1^{9}\cdot\phi_3\Bigl(\{4\}\cdot\{3\}\cdot\{3\}\cdot\{3\}\cdot\{2\}\cdot\{2\}\cdot\{2\}\cdot\{1\}\cdot\{1\}\cdot\{1\}\Bigr)}
{s_1^{8}\cdot\phi_3\Bigl(\{2\}\cdot\{2\}\cdot\{1\}\cdot\{1\}\cdot\{1\}\cdot\{2\}\cdot\{2\}\cdot\{1\}\cdot\{1\}\cdot\{1\}\Bigr)}\notag
\\
=&s_1\cdot \frac{\phi_3\Bigl(\{4\}!\cdot\{3\}!\cdot\{3\}!)\Bigr)}
{\phi_3\Bigl((\{2\}!\cdot\{2\}!)\cdot(\{2\}!\cdot\{1\}!)\cdot(\{2\}!\cdot\{1\}!)\Bigr)}\notag\\
=&s_1\cdot\phi_3\Biggl(\left\{
{\arraycolsep=1.4pt
\begin{array}{c}4\\2\end{array}}
\right\}
\cdot\left\{
{\arraycolsep=1.4pt
\begin{array}{c}3\\2\end{array}}\right\}
\cdot\left\{
{\arraycolsep=1.4pt
\begin{array}{c}3\\2\end{array}}
\right\}\Biggr)
\label{E:binexp}\\
=&s_1\cdot\phi_3\Bigl((\s^4+3\s^2\t+2\t^2)\cdot(\s^2+\t)\cdot(\s^2+\t)\Bigr)\notag\\
=&s_1\cdot\phi_3\Bigl(\s^{8}+5\s^6\t+9\s^4\t^2+7\s^2\t^3+2\t^4\Bigr).
\label{E:Ideaworks}
\end{align}
The action of $\phi_3$, which maps $\s\rightarrow s_3$ and $\t \rightarrow s_6$, in
(\ref{E:Ideaworks}) yields (\ref{Ex:tenfivefive}). 
Finally, we have $\gamma_{3}(10,5)=(3-2)(3-2)=1$ since $a=5\pmod{3}=2$ and $h=(10-5)\pmod{3}=2$. 
\end{Example}

\section{Combinatorial interpretation for $\CombAB$}
\label{S:CombInterp}
In \cite{Sagan2018}, the authors give a combinatorial interpretation for $\Combmn$. 
This interpretation considers certain lattice paths inside tilings of Young diagrams.  The combinatorial interpretation for $\CombAB$ will consider sequences of lattice paths inside tilings of Young diagrams.

Let $\lambda = (\lambda_1, \lambda_2, \dots, \lambda_l)$ be an integer partition, that is, $\lambda_1 \geq \lambda_2 \geq \dots \geq \lambda_l >0$.  We say that each $\lambda_i$ is a \emph{part} of $\lambda$ and the \emph{length}, $l(\lambda)$, of $\lambda$ is the number of its parts.  The \emph{Young diagram} of $\lambda$ is an array of left-justified rows of boxes which we will write in French notation so that $\lambda_i$ is the number of boxes in the $i^{\text{th}}$,  row from the bottom of the diagram.  We embed the Young diagram of $\lambda$ in the Cartesian plane in the following way: the bottom left corner of the leftmost box of the first row is at the point $(0, 0)$, and each box is a unit square.  For a reason that will soon become clear, we will include the line segment from $(\lambda_1, 0)$ to $(\lambda_1+1, 0)$ and the line segment from $(0, l(\lambda))$ to $(0, l(\lambda)+1)$ as part of the embedding of the Young diagram of $\lambda$ in the Cartesian plane. We will use the notation $\lambda$ to denote the Young diagram of $\lambda$.

Define $r\cdot\lambda$ by
\[ r\cdot\lambda = (r\lambda_1, r\lambda_2, \dots, r\lambda_l).\]

\noindent
An important partition in this section is the staircase partition $\delta_m$, defined by
\[ \delta_m = (m-1, m-2, \dots, 1).\]


A \emph{tiling} of $\lambda$ is a sequence $(w_1, w_2, \dots, w_{l(\lambda)})$ where for all $i$, $w_i$ is a tiling word of $\lambda_i$ by $\tau_1$ and $\tau_2$.  This can be represented by tiling the cells of the rows of $\lambda$ as seen in Example \ref{E: delta7}.  Let $\mathcal{T}(\lambda)$ be the set of all such tilings of $\lambda$ and let
\[\wt(\lambda) = \sum_{T \in\mathcal{T}(\lambda)}\wt(T).\]
It is easy to see that $\wt(\delta_m) = \{m\}!$.  The authors of \cite{Sagan2018} show that 
$\Combmn$
is a polynomial by partitioning $\mathcal{T}(\delta_n)$ into blocks $\beta_1, \beta_2, \dots, \beta_k$, where $\{m-n\}!\{n\}!$ divides $\wt(\beta_i)$ for all $i$ and for all $0\leq n \leq m$.

\begin{Example} \label{E: delta7}
The following is the embedding of $\delta_7$ in the Cartesian plane and a tiling $T$ of $\delta_7$.  It follows that $\wt(T) = s_1^9+s_2^6$.

\begin{center}
\begin{tikzpicture}[scale=0.4, every node/.style={scale=0.5}]
\draw[-](0,0) rectangle  (1,1); 
\draw[-](1,0) rectangle  (2,1); 
\draw[-](2,0) rectangle  (3,1); 
\draw[-](3,0) rectangle  (4,1); 
\draw[-](4,0) rectangle  (5,1); 
\draw[-](5,0) rectangle  (6,1); 

\draw[-](0,1) rectangle  (1,2); 
\draw[-](1,1) rectangle  (2,2); 
\draw[-](2,1) rectangle  (3,2); 
\draw[-](3,1) rectangle  (4,2); 
\draw[-](4,1) rectangle  (5,2); 

\draw[-](0,2) rectangle  (1,3); 
\draw[-](1,2) rectangle  (2,3); 
\draw[-](2,2) rectangle  (3,3); 
\draw[-](3,2) rectangle  (4,3); 

\draw[-](0,3) rectangle  (1,4); 
\draw[-](1,3) rectangle  (2,4); 
\draw[-](2,3) rectangle  (3,4);

\draw[-](0,4) rectangle  (1,5); 
\draw[-](1,4) rectangle  (2,5);

\draw[-](0,5) rectangle  (1,6); 

\draw[-] (6, 0) to (7, 0);
\draw[-] (0, 6) to (0, 7);
\end{tikzpicture}
\hspace{1.5 cm}
\begin{tikzpicture}[scale=0.4, every node/.style={scale=0.5}]
\draw[-](0,0) rectangle  (1,1); 
\draw[-](1,0) rectangle  (2,1); 
\draw[-](2,0) rectangle  (3,1); 
\draw[-](3,0) rectangle  (4,1); 
\draw[-](4,0) rectangle  (5,1); 
\draw[-](5,0) rectangle  (6,1); 
\mono{0}{0}
\hdom{1}{0}
\hdom{3}{0}
\mono{5}{0}

\draw[-](0,1) rectangle  (1,2); 
\draw[-](1,1) rectangle  (2,2); 
\draw[-](2,1) rectangle  (3,2); 
\draw[-](3,1) rectangle  (4,2); 
\draw[-](4,1) rectangle  (5,2); 
\hdom{0}{1}
\mono{2}{1}
\hdom{3}{1}

\draw[-](0,2) rectangle  (1,3); 
\draw[-](1,2) rectangle  (2,3); 
\draw[-](2,2) rectangle  (3,3); 
\draw[-](3,2) rectangle  (4,3);
\mono{0}{2}
\mono{1}{2}
\hdom{2}{2}

\draw[-](0,3) rectangle  (1,4); 
\draw[-](1,3) rectangle  (2,4); 
\draw[-](2,3) rectangle  (3,4);
\mono{0}{3}
\mono{1}{3}
\mono{2}{3}

\draw[-](0,4) rectangle  (1,5); 
\draw[-](1,4) rectangle  (2,5);
\hdom{0}{4}

\draw[-](0,5) rectangle  (1,6); 
\mono{0}{5}

\draw[-] (6, 0) to (7, 0);
\draw[-] (0, 6) to (0, 7);

\end{tikzpicture}
\end{center}
\end{Example}

A \emph{partial tiling path of $\delta_m$} is a lattice path $p$ from $(n, 0)$ to $(0, m)$ consisting of unit steps north (N) and west (W) that have the following properties: $p$ is contained in $\delta_m$, every W step must be followed by an N step, and if $p$ contains the point $(m-i, i)$ for some $i$, the step from that point is an N.

Let $p$ be a partial tiling path of $\delta_m$ from $(n, 0)$ to $(0, m)$. A \emph{binomial partial tiling of type $p$} is a partial tiling of the rows of $\delta_m$ such that if the step N is immediately preceded by W, the tiling is of the cells to the right of that N step and the first tile must be $\tau_2$ if the number of cells is greater than $0$.  Otherwise, tile the cells in the part of the row left of the N step with $\tau_1$'s and $\tau_2$'s.

\begin{Example}
The following is a partial tiling path $p$ of $\delta_7$ and a binomial partial tiling of type $p$.  In this case, $n=4$.

\begin{center}
\begin{tikzpicture}[scale=0.4, every node/.style={scale=0.5}]
\draw[-](0,0) rectangle  (1,1); 
\draw[-](1,0) rectangle  (2,1); 
\draw[-](2,0) rectangle  (3,1); 
\draw[-](3,0) rectangle  (4,1); 
\draw[-](4,0) rectangle  (5,1); 
\draw[-](5,0) rectangle  (6,1); 

\draw[-](0,1) rectangle  (1,2); 
\draw[-](1,1) rectangle  (2,2); 
\draw[-](2,1) rectangle  (3,2); 
\draw[-](3,1) rectangle  (4,2); 
\draw[-](4,1) rectangle  (5,2); 

\draw[-](0,2) rectangle  (1,3); 
\draw[-](1,2) rectangle  (2,3); 
\draw[-](2,2) rectangle  (3,3); 
\draw[-](3,2) rectangle  (4,3); 

\draw[-](0,3) rectangle  (1,4); 
\draw[-](1,3) rectangle  (2,4); 
\draw[-](2,3) rectangle  (3,4);

\draw[-](0,4) rectangle  (1,5); 
\draw[-](1,4) rectangle  (2,5);

\draw[-](0,5) rectangle  (1,6); 

\draw[-] (6, 0) to (7, 0);
\draw[-] (0, 6) to (0, 7);

\draw[very thick, -] (4, 0) to (3, 0);
\draw[very thick, -] (3, 0) to (3, 1);
\draw[very thick, -] (3, 1) to (3, 2);
\draw[very thick, -] (3, 2) to (2, 2);
\draw[very thick, -] (2, 2) to (2, 3);
\draw[very thick, -] (2, 3) to (2, 4);
\draw[very thick, -] (2, 4) to (2, 5);
\draw[very thick, -] (2, 5) to (1, 5);
\draw[very thick, -] (1, 5) to (1, 6);
\draw[very thick, -] (1, 6) to (0, 6);
\draw[very thick, -] (0, 6) to (0, 7);

\end{tikzpicture}
\hspace{1.5 cm}
\begin{tikzpicture}[scale=0.4, every node/.style={scale=0.5}]
\draw[-](0,0) rectangle  (1,1); 
\draw[-](1,0) rectangle  (2,1); 
\draw[-](2,0) rectangle  (3,1); 
\draw[-](3,0) rectangle  (4,1); 
\draw[-](4,0) rectangle  (5,1); 
\draw[-](5,0) rectangle  (6,1);

\draw[-](0,1) rectangle  (1,2); 
\draw[-](1,1) rectangle  (2,2); 
\draw[-](2,1) rectangle  (3,2); 
\draw[-](3,1) rectangle  (4,2); 
\draw[-](4,1) rectangle  (5,2); 

\draw[-](0,2) rectangle  (1,3); 
\draw[-](1,2) rectangle  (2,3); 
\draw[-](2,2) rectangle  (3,3); 
\draw[-](3,2) rectangle  (4,3); 

\draw[-](0,3) rectangle  (1,4); 
\draw[-](1,3) rectangle  (2,4); 
\draw[-](2,3) rectangle  (3,4);

\draw[-](0,4) rectangle  (1,5); 
\draw[-](1,4) rectangle  (2,5);

\draw[-](0,5) rectangle  (1,6); 

\draw[-] (6, 0) to (7, 0);
\draw[-] (0, 6) to (0, 7);

\draw[very thick, -] (4, 0) to (3, 0);
\draw[very thick, -] (3, 0) to (3, 1);
\draw[very thick, -] (3, 1) to (3, 2);
\draw[very thick, -] (3, 2) to (2, 2);
\draw[very thick, -] (2, 2) to (2, 3);
\draw[very thick, -] (2, 3) to (2, 4);
\draw[very thick, -] (2, 4) to (2, 5);
\draw[very thick, -] (2, 5) to (1, 5);
\draw[very thick, -] (1, 5) to (1, 6);
\draw[very thick, -] (1, 6) to (0, 6);
\draw[very thick, -] (0, 6) to (0, 7);

\hdom{3}{0}
\mono{5}{0}
\hdom{0}{1}
\mono{2}{1}
\hdom{2}{2}
\mono{0}{3}
\mono{1}{3}
\hdom{0}{4}

\end{tikzpicture}
\end{center}
\end{Example}

\begin{theorem}\cite{Sagan2018}
Given $0 \leq n \leq m$, we have 
\[ \bim{m}{n} = \sum_B\wt(B),\]
where the sum is over all binomial partial tilings associated with partial tiling paths of $\delta_m$ from $(n, 0)$ to $(0, m)$.
\end{theorem}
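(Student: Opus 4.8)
The plan is to prove the identity by induction on $m$, showing that the weight generating function $F(m,n)=\sum_B\wt(B)$ satisfies a recurrence and boundary conditions that force it to equal $\bim{m}{n}$. Recall from Section 2 that the Lucanomials obey
\[\bim{m}{n}=\{m-n+1\}\cdot\bim{m-1}{n-1}+s_2\cdot\{n-1\}\cdot\bim{m-1}{n},\]
and that $\bim{m}{n}=\bim{m}{m-n}$ directly from the factorial definition. So it suffices to produce a combinatorial recurrence for $F$ that the Lucanomials are already known to satisfy, together with matching initial data. For the boundary, I would first observe that when $n=0$ the only admissible path runs straight up the $y$-axis, meets no cells, and so $F(m,0)=1=\bim{m}{0}$; the case $n=m$ is symmetric. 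These, with small $m$, serve as the base of the induction.

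For the inductive step I would decompose the partial tiling paths of $\delta_m$ according to how they cross the bottom row, which has length $m-1$. Because no two $W$ steps may be consecutive and the path cannot return to height $0$ once it has left it, a path starting at $(n,0)$ leaves the bottom row in exactly one of two ways: it takes an immediate $N$ step at column $n$, or it takes a single $W$ step to column $n-1$ followed by an $N$ step. In the first case the $N$ step is not preceded by $W$, so the defining rule tiles the $n$ cells strictly to the left of it freely with $\tau_1$'s and $\tau_2$'s, contributing a factor $\{n+1\}$; in the second case the $N$ step is preceded by $W$, so the rule tiles the $m-n$ cells to its right with a forced leading $\tau_2$, contributing $s_2\{m-n-1\}$ (which correctly vanishes when $m-n=1$, since a domino cannot fit in a single cell).

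Deleting the bottom row and translating down by one should identify the remainder of each path with a partial tiling path of $\delta_{m-1}$: from $(n,0)$ to $(0,m-1)$ in the first case and from $(n-1,0)$ to $(0,m-1)$ in the second, carrying the same induced tiling weight. By the inductive hypothesis these contribute $\bim{m-1}{n}$ and $\bim{m-1}{n-1}$ respectively, yielding
\[F(m,n)=\{n+1\}\cdot\bim{m-1}{n}+s_2\{m-n-1\}\cdot\bim{m-1}{n-1}.\]
This is exactly the Lucanomial recurrence written for $\bim{m}{m-n}$ (apply the displayed recurrence with upper index $m$ and lower index $m-n$, then use the symmetry $\bim{m-1}{m-n-1}=\bim{m-1}{n}$ and $\bim{m-1}{m-n}=\bim{m-1}{n-1}$), so the symmetry $\bim{m}{n}=\bim{m}{m-n}$ gives $F(m,n)=\bim{m}{n}$ and closes the induction.

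I expect the main obstacle to be the bookkeeping at the interface between the bottom row and the rest of the diagram. One must verify that deleting the bottom row sends admissible paths to admissible paths and is a bijection onto them: that containment in $\delta_{m-1}$, the condition that every $W$ step be followed by an $N$ step, and the diagonal condition at the points $(m-i,i)$ are all preserved, with no path lost or double counted. In particular, the forced leading $\tau_2$ of the second case and the diagonal condition must be shown to mesh correctly at the first row the reduced path enters, and the cell counts ($n$ to the left, $m-n$ to the right) must be checked to persist in the extreme columns. Once this translation is in place the weight factorization is immediate, and the induction finishes the argument.
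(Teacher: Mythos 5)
First, a point of reference: the paper does not actually prove this statement---it is imported wholesale from \cite{Sagan2018}---so there is no internal proof to compare against. Your strategy (peel off the bottom row, split on whether the path's first move is $N$ or $WN$, and recognize the resulting recurrence as the Lucanomial recurrence in disguise) is the standard inductive argument and is essentially how the cited source proceeds. Your algebra is correct: applying the displayed recurrence to $\bim{m}{m-n}$ and using $\bim{m}{n}=\bim{m}{m-n}$ does give exactly $F(m,n)=\{n+1\}\cdot\bim{m-1}{n}+s_2\{m-n-1\}\cdot\bim{m-1}{n-1}$; the two row-contributions $\{n+1\}$ and $s_2\{m-n-1\}$ are the right cell counts for a bottom row of $m-1$ cells; and the base cases work ($n=0$ gives the all-$N$ path of weight $1$, and $n=m$ gives the unique path $(WN)^m$, each of whose $N$ steps has zero cells to its right).

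The one place where your deferred ``bookkeeping'' conceals a genuine issue is the third admissibility condition, the one involving the points $(m-i,i)$. As transcribed in this paper it cannot be read literally: the paper's own example path in $\delta_7$ with $n=4$ takes $W$ steps from $(2,5)$ and $(1,6)$, both of the form $(7-i,i)$; worse, for $m=3$, $n=2$ the only path of nonzero weight is $NWNWN$, which passes through $(2,1)=(3-1,1)$ and immediately steps $W$, so the literal condition would force the sum to be $0$ rather than $\bim{3}{2}=s_1^2+s_2$. Consequently, the step you correctly identify as the crux---checking that deleting the bottom row is a weight-preserving bijection between admissible paths of $\delta_m$ and admissible paths of $\delta_{m-1}$---cannot be completed until the condition is stated correctly; only then can one observe that the corner set $\{(m-i,i): i\ge 1\}$ of $\delta_m$ translates onto the corner set of $\delta_{m-1}$, so the condition is inherited. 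Containment and the no-consecutive-$W$ requirement are, as you anticipate, preserved by the translation without incident. With the diagonal condition repaired, your induction closes and reproduces the source's proof.
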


A \emph{$r$-partial tiling path of $r \cdot \delta_m$} is a lattice path $p$ from $(r\cdot n, 0)$ to $(0, m)$ consisting of unit steps north (N) and length $r$ steps west (W) that have the following properties: $p$ is contained in $r\cdot \delta_m$, every W step must be followed by an N step, and if $p$ contains the point $(r\cdot(m-i), i)$ for some $i$, the step from that point is an N.  It is easy to see that the set of $r$-partial tiling paths of $r \cdot \delta_m$ is in bijection with the set of partial tiling paths of $\delta_m$.

Let $p$ be a partial tiling path of $r\cdot \delta_m$ from $(r\cdot n, 0)$ to $(0, m)$. A \emph{$r$-binomial partial tiling of type $p$} is a partial tiling of the rows of $r\cdot \delta_m$ with $\tau_r$'s and $\tau_{2r}$'s such that if the step N is immediately preceded by W, the tiling is of the cells to the right of that N step and the first tile must be $\tau_{2r}$ if the number of cells is greater than $0$.  Otherwise, tile the cells to the left of N with $\tau_r$'s and $\tau_{2r}$'s.

The set of all $r$-binomial partial tilings is in bijection with set of all binomial partial tilings.

\begin{Example}
The following is a $3$-partial tiling path $p$ of $3\cdot \delta_7$ and a $3$-binomial partial tiling of type $p$.  In this case, $n=4.$

\begin{center}
\begin{tikzpicture}[scale=0.25, every node/.style={scale=0.5}]
\draw[-](0,0) rectangle  (1,1); 
\draw[-](1,0) rectangle  (2,1); 
\draw[-](2,0) rectangle  (3,1); 
\draw[-](3,0) rectangle  (4,1); 
\draw[-](4,0) rectangle  (5,1); 
\draw[-](5,0) rectangle  (6,1);
\draw[-](6,0) rectangle  (7,1); 
\draw[-](7,0) rectangle  (8,1); 
\draw[-](8,0) rectangle  (9,1); 
\draw[-](9,0) rectangle  (10,1); 
\draw[-](10,0) rectangle  (11,1); 
\draw[-](11,0) rectangle  (12,1);
\draw[-](12,0) rectangle  (13,1); 
\draw[-](13,0) rectangle  (14,1); 
\draw[-](14,0) rectangle  (15,1); 
\draw[-](15,0) rectangle  (16,1); 
\draw[-](16,0) rectangle  (17,1); 
\draw[-](17,0) rectangle  (18,1);

\draw[-](0,1) rectangle  (1,2); 
\draw[-](1,1) rectangle  (2,2); 
\draw[-](2,1) rectangle  (3,2); 
\draw[-](3,1) rectangle  (4,2); 
\draw[-](4,1) rectangle  (5,2);
\draw[-](5,1) rectangle  (6,2); 
\draw[-](6,1) rectangle  (7,2); 
\draw[-](7,1) rectangle  (8,2); 
\draw[-](8,1) rectangle  (9,2); 
\draw[-](9,1) rectangle  (10,2);
\draw[-](10,1) rectangle  (11,2); 
\draw[-](11,1) rectangle  (12,2); 
\draw[-](12,1) rectangle  (13,2); 
\draw[-](13,1) rectangle  (14,2); 
\draw[-](14,1) rectangle  (15,2);

\draw[-](0,2) rectangle  (1,3); 
\draw[-](1,2) rectangle  (2,3); 
\draw[-](2,2) rectangle  (3,3); 
\draw[-](3,2) rectangle  (4,3);
\draw[-](4,2) rectangle  (5,3); 
\draw[-](5,2) rectangle  (6,3); 
\draw[-](6,2) rectangle  (7,3); 
\draw[-](7,2) rectangle  (8,3);
\draw[-](8,2) rectangle  (9,3); 
\draw[-](9,2) rectangle  (10,3); 
\draw[-](10,2) rectangle  (11,3); 
\draw[-](11,2) rectangle  (12,3);

\draw[-](0,3) rectangle  (1,4); 
\draw[-](1,3) rectangle  (2,4); 
\draw[-](2,3) rectangle  (3,4);
\draw[-](3,3) rectangle  (4,4); 
\draw[-](4,3) rectangle  (5,4); 
\draw[-](5,3) rectangle  (6,4);
\draw[-](6,3) rectangle  (7,4); 
\draw[-](7,3) rectangle  (8,4); 
\draw[-](8,3) rectangle  (9,4);

\draw[-](0,4) rectangle  (1,5); 
\draw[-](1,4) rectangle  (2,5);
\draw[-](2,4) rectangle  (3,5); 
\draw[-](3,4) rectangle  (4,5);
\draw[-](4,4) rectangle  (5,5); 
\draw[-](5,4) rectangle  (6,5);

\draw[-](0,5) rectangle  (1,6);
\draw[-](1,5) rectangle  (2,6);
\draw[-](2,5) rectangle  (3,6);

\draw[-] (6, 0) to (7, 0);
\draw[-] (0, 6) to (0, 7);

\draw[very thick, -] (12, 0) to (9, 0);
\draw[very thick, -] (9, 0) to (9, 1);
\draw[very thick, -] (9, 1) to (9, 2);
\draw[very thick, -] (9, 2) to (6, 2);
\draw[very thick, -] (6, 2) to (6, 3);
\draw[very thick, -] (6, 3) to (6, 4);
\draw[very thick, -] (6, 4) to (6, 5);
\draw[very thick, -] (6, 5) to (3, 5);
\draw[very thick, -] (3, 5) to (3, 6);
\draw[very thick, -] (3, 6) to (0, 6);
\draw[very thick, -] (0, 6) to (0, 7);

\end{tikzpicture}
\hspace{1.5 cm}
\begin{tikzpicture}[scale=0.25, every node/.style={scale=0.5}]
\draw[-](0,0) rectangle  (1,1); 
\draw[-](1,0) rectangle  (2,1); 
\draw[-](2,0) rectangle  (3,1); 
\draw[-](3,0) rectangle  (4,1); 
\draw[-](4,0) rectangle  (5,1); 
\draw[-](5,0) rectangle  (6,1);
\draw[-](6,0) rectangle  (7,1); 
\draw[-](7,0) rectangle  (8,1); 
\draw[-](8,0) rectangle  (9,1); 
\draw[-](9,0) rectangle  (10,1); 
\draw[-](10,0) rectangle  (11,1); 
\draw[-](11,0) rectangle  (12,1);
\draw[-](12,0) rectangle  (13,1); 
\draw[-](13,0) rectangle  (14,1); 
\draw[-](14,0) rectangle  (15,1); 
\draw[-](15,0) rectangle  (16,1); 
\draw[-](16,0) rectangle  (17,1); 
\draw[-](17,0) rectangle  (18,1);

\draw[-](0,1) rectangle  (1,2); 
\draw[-](1,1) rectangle  (2,2); 
\draw[-](2,1) rectangle  (3,2); 
\draw[-](3,1) rectangle  (4,2); 
\draw[-](4,1) rectangle  (5,2);
\draw[-](5,1) rectangle  (6,2); 
\draw[-](6,1) rectangle  (7,2); 
\draw[-](7,1) rectangle  (8,2); 
\draw[-](8,1) rectangle  (9,2); 
\draw[-](9,1) rectangle  (10,2);
\draw[-](10,1) rectangle  (11,2); 
\draw[-](11,1) rectangle  (12,2); 
\draw[-](12,1) rectangle  (13,2); 
\draw[-](13,1) rectangle  (14,2); 
\draw[-](14,1) rectangle  (15,2);

\draw[-](0,2) rectangle  (1,3); 
\draw[-](1,2) rectangle  (2,3); 
\draw[-](2,2) rectangle  (3,3); 
\draw[-](3,2) rectangle  (4,3);
\draw[-](4,2) rectangle  (5,3); 
\draw[-](5,2) rectangle  (6,3); 
\draw[-](6,2) rectangle  (7,3); 
\draw[-](7,2) rectangle  (8,3);
\draw[-](8,2) rectangle  (9,3); 
\draw[-](9,2) rectangle  (10,3); 
\draw[-](10,2) rectangle  (11,3); 
\draw[-](11,2) rectangle  (12,3);

\draw[-](0,3) rectangle  (1,4); 
\draw[-](1,3) rectangle  (2,4); 
\draw[-](2,3) rectangle  (3,4);
\draw[-](3,3) rectangle  (4,4); 
\draw[-](4,3) rectangle  (5,4); 
\draw[-](5,3) rectangle  (6,4);
\draw[-](6,3) rectangle  (7,4); 
\draw[-](7,3) rectangle  (8,4); 
\draw[-](8,3) rectangle  (9,4);

\draw[-](0,4) rectangle  (1,5); 
\draw[-](1,4) rectangle  (2,5);
\draw[-](2,4) rectangle  (3,5); 
\draw[-](3,4) rectangle  (4,5);
\draw[-](4,4) rectangle  (5,5); 
\draw[-](5,4) rectangle  (6,5);

\draw[-](0,5) rectangle  (1,6);
\draw[-](1,5) rectangle  (2,6);
\draw[-](2,5) rectangle  (3,6);

\draw[-] (6, 0) to (7, 0);
\draw[-] (0, 6) to (0, 7);

\draw[very thick, -] (12, 0) to (9, 0);
\draw[very thick, -] (9, 0) to (9, 1);
\draw[very thick, -] (9, 1) to (9, 2);
\draw[very thick, -] (9, 2) to (6, 2);
\draw[very thick, -] (6, 2) to (6, 3);
\draw[very thick, -] (6, 3) to (6, 4);
\draw[very thick, -] (6, 4) to (6, 5);
\draw[very thick, -] (6, 5) to (3, 5);
\draw[very thick, -] (3, 5) to (3, 6);
\draw[very thick, -] (3, 6) to (0, 6);
\draw[very thick, -] (0, 6) to (0, 7);

\hhex{9}{0}
\htri{15}{0}

\hhex{0}{1}
\htri{6}{1}

\hhex{6}{2}

\htri{0}{3}
\htri{3}{3}

\hhex{0}{4}

\end{tikzpicture}
\end{center}
\end{Example}

\begin{theorem}\label{T:Staircase}
We have 
\[ \phi_r\biggl(\bim{m}{n}\biggr) = \sum_C\wt(C),\]
where the sum is over all $r$-binomial partial tilings associated with $r$-partial tiling paths of $r \cdot \delta_m$ from $(r\cdot n, 0)$ to $(0, m)$.
\end{theorem}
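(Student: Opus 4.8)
The plan is to derive this identity from the binomial tiling theorem of \cite{Sagan2018} stated immediately above, by applying the homomorphism $\phi_r$ to both sides and transporting the resulting sum across the bijection between binomial partial tilings and $r$-binomial partial tilings described just before the statement. First I would invoke that theorem to write $\bim{m}{n}=\sum_B\wt(B)$, where $B$ ranges over the binomial partial tilings associated to partial tiling paths of $\delta_m$ from $(n,0)$ to $(0,m)$. Because $\phi_r$ is a (multiplicative, linear) homomorphism, it distributes over this finite sum, giving
\[
\phi_r\biggl(\bim{m}{n}\biggr)=\sum_B\phi_r\bigl(\wt(B)\bigr).
\]

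The heart of the argument is to show that the bijection $B\mapsto C$ onto $r$-binomial partial tilings is weight-preserving once $\phi_r$ is applied, i.e.\ that $\phi_r(\wt(B))=\wt(C)$ for each $B$. To see this I would note that $B$ tiles its cells with $\tau_1$ and $\tau_2$, so $\wt(B)=s_1^a s_2^b$ where $a$ and $b$ count the $\tau_1$ and $\tau_2$ tiles; applying $\phi_r$ and using $\phi_r(\s)=s_r$, $\phi_r(\t)=s_{2r}$ yields $\phi_r(\wt(B))=s_r^a s_{2r}^b$. Under the bijection the path in $\delta_m$ is stretched horizontally by a factor of $r$ into an $r$-partial tiling path of $r\cdot\delta_m$, and each $\tau_1$ is replaced by $\tau_r$ and each $\tau_2$ by $\tau_{2r}$, so $C$ has exactly $a$ tiles of length $r$ and $b$ tiles of length $2r$ and hence $\wt(C)=s_r^a s_{2r}^b$. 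This matches, and substituting and reindexing the sum along the bijection gives
\[
\phi_r\biggl(\bim{m}{n}\biggr)=\sum_B\phi_r\bigl(\wt(B)\bigr)=\sum_C\wt(C),
\]
which is the claim.

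I expect the only delicate point to be the bookkeeping that certifies the bijection in the middle paragraph: one must check that the defining restrictions on legal tilings transfer correctly under scaling, namely that the requirement that the first tile after a W step be $\tau_2$ corresponds exactly to the requirement that it be $\tau_{2r}$, and that the tile-length dictionary $1\leftrightarrow r$, $2\leftrightarrow 2r$ is precisely the one induced by $\phi_r$ on $\s$ and $\t$. Both facts are immediate from the definitions of $r$-partial tiling paths and $r$-binomial partial tilings given just before the statement, so no substantive obstacle remains beyond this verification; the bijections themselves are already asserted in the text.
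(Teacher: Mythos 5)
Your proposal is correct and follows essentially the same route as the paper's proof: invoke the tiling interpretation of $\bim{m}{n}$ from \cite{Sagan2018}, push $\phi_r$ through the finite sum using that it is a homomorphism, and reindex along the weight-compatible bijection $B\mapsto C$ (which the paper calls $\Phi_r$ and which satisfies $\wt(\Phi_r(B))=\phi_r(\wt(B))$). The paper's proof is exactly this argument, stated slightly more tersely.
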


\begin{proof}
Let $\Phi_r$ be the map that sends a binomial partial tiling to its associated $r$-binomial partial tiling.  This map sends the tile $\tau_1$ to $\tau_r$ and sends the tile $\tau_2$ to $\tau_{2r}$.  Therefore, $\wt(\Phi_r(B)) = \phi_r(\wt(B))$. Since $\phi_r$ is a homomorphism, we have  
\[ \phi_r\biggl(\bim{m}{n}\biggr) =\phi_r\bigl(\sum_B\wt(B)\bigr)= \sum_B\wt(\Phi_r(B)),\]
where both sums are over all binomial partial tilings associated with partial tiling path of $\delta_m$ from $(n, 0)$ to $(0, m)$.

For every $r$-partial tiling $C$, there is a unique partial tiling $B$ such that $C = \Phi_r(B)$.  Therefore, we have 
\[ \phi_r\biggl(\bim{m}{n}\biggr) = \sum_C\wt(C),\]
where the sum is over all $r$-binomial partial tilings associated with $r$-partial tiling path of $r \cdot \delta_m$ from $(r\cdot n, 0)$ to $(0, m)$ as desired.
\end{proof}

A combinatorial interpretation for 
$\CombAB$ is found by combining Theorem \ref{T:Big2Theorem} and Theorem \ref{T:Staircase}.  







\section{Catalan Analogues}
In \cite{Sagan2018}, the authors showed that the Catalan analogues of Lucas polynomials, 
$$\frac{\{2m\}!}{\{m+1\}!\cdot\{m\}!}
=\frac{1}{\{m+1\}}\bim{2m}{m}$$ 
are polynomial.  We will now show that this result does not always extend to $$\CatAB =\frac{1}{\{m+1\}\subn}\bim{2m}{m}\subn,$$ but there are cases in which it does. 

\begin{Example}
Not all rational functions of the form
$$\frac{1}{\{m+1\}\subn}\bim{2m}{m}\subn$$
are polynomial.
With $r=5$, computationally it is not hard to show that
$$\frac{1}{\{10\}_5}\bim{18}{9}_5=
\frac{(s_5^2+2s_{10})^3\cdot(s_5^2+s_{10})^5}{s_1^3s_5}.$$
Note that
\begin{align}
\frac{1}{\{10\}_5}\bim{18}{9}_5=&\frac{s_1^{33}\cdot\phi_5\Bigl((\{4\}!)^3\cdot (\{3\}!)^2\Bigr)}
{s_1^{32+4}\cdot\phi_5\Bigl(\{2\}\cdot(\{2\}!)^8\cdot(\{1\}!)^2\Bigr)}\notag\\
=&\frac{1}{s_1^3}\cdot\phi_5\Bigl(\frac{1}{\{2\}}\cdot
\bim{4}{2}\cdot\bim{4}{2}\cdot\bim{4}{2}\cdot\bim{3}{2}\cdot\bim{3}{2}
\Bigr).
\label{E:listofbinoms}
\end{align}
There is no way to attach
$\frac{1}{\{2\}}$ to any of the binomials in (\ref{E:listofbinoms})
to create a Catalan analogue.
Thus the corresponding term $\frac{1}{\{10\}_5}\bim{18}{9}_5$
is not necessarily polynomial.
Also note that in this particular example, the exponent of $s_1$ in the numerator is smaller than the exponent of $s_1$ in the denominator.
\end{Example}

The previous example highlights what is required for $\CatAB$
to be polynomial.
First, using Theorem \ref{T:Big2Theorem}, check that the pre-image of $\CatAB$ 
is polynomial in $\mathbb{Z}[\s,\t]$.
Second, check that the exponent of $s_1$
in the numerator is at least as large as the exponent of $s_1$
in the denominator. If these two conditions hold, $\CatAB$
is polynomial.

\begin{theorem}
If $m \pmod{r} < \frac{r}{2}$, then
\[\frac{1}{\{m+1\}\subn}\bim{2m}{m}\subn \]
is polynomial.
\end{theorem}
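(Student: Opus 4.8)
The plan is to run the two-step test described just before the statement: first show that the preimage of $\CatAB$ under $s_1^{\bullet}\cdot\phi_r$ is polynomial in $\mathbb{Z}[\s,\t]$, and then show that the exponent of $s_1$ coming from the numerator is at least that coming from the denominator. For the bookkeeping, I would apply Lemma~\ref{T:applyphi} to each of the three factorials in $\CatAB=\{2m\}\subn!/(\{m+1\}\subn!\,\{m\}\subn!)$, writing $\{k\}\subn!=s_1^{h_r(k)}\phi_r(R_k)$ with $R_k=\prod_{j=0}^{r-1}\{\lceil (k-j)/r\rceil\}!$. Since $\phi_r$ is a homomorphism, this yields
\[
\CatAB=s_1^{\,E}\cdot\phi_r\!\left(\frac{R_{2m}}{R_{m+1}R_m}\right),\qquad E=h_r(2m)-h_r(m+1)-h_r(m),
\]
so it suffices to prove that $R_{2m}/(R_{m+1}R_m)$ is a polynomial and that $E\ge 0$ (the case $r=1$ being the classical Catalan analogue of \cite{Sagan2018}).

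To see that the ratio is polynomial, I would record the multisets of ceilings appearing in each $R_k$, exactly as in Theorem~\ref{T:Matching}. Writing $\mu=m\pmod r$ and $q=\lfloor m/r\rfloor$, the hypothesis $\mu<r/2$ forces $2\mu<r$, so that no wraparound occurs and the three multisets are: $2\mu$ copies of $2q+1$ and $r-2\mu$ copies of $2q$ (from $R_{2m}$); $\mu$ copies of $q+1$ and $r-\mu$ copies of $q$ (from $R_m$); and $\mu+1$ copies of $q+1$ and $r-\mu-1$ copies of $q$ (from $R_{m+1}$). Distributing the numerator factorials over the denominator factorials by a matching analogous to Theorem~\ref{T:Matching} then gives the explicit decomposition
\[
\frac{R_{2m}}{R_{m+1}R_m}
=\bim{2q+1}{q}^{\,2\mu}\cdot\left(\frac{1}{\{q+1\}}\bim{2q}{q}\right)\cdot\bim{2q}{q}^{\,r-2\mu-1},
\]
which one checks matches the numerator and denominator exponents of $\{2q+1\}!,\{2q\}!,\{q+1\}!,\{q\}!$ term by term. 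Each Lucanomial factor is a polynomial, and the single Catalan analogue $\frac{1}{\{q+1\}}\bim{2q}{q}$ is a polynomial by \cite{Sagan2018}; hence the ratio lies in $\mathbb{Z}[\s,\t]$, and Lemma~\ref{T:IsPoly} makes its image under $\phi_r$ polynomial.

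For the exponent, I would use the closed form $h_r(ar+b)=a\binom{r}{2}+\binom{b}{2}$ for $0\le b<r$, which follows by summing $((i-1)\pmod r)$ over full periods. Substituting $m=qr+\mu$, and using $2\mu<r$ to expand $m+1=qr+(\mu+1)$ and $2m=2qr+2\mu$ without wraparound, the terms proportional to $q$ cancel and the computation collapses to
\[
E=\binom{2\mu}{2}-\binom{\mu+1}{2}-\binom{\mu}{2}=\mu(\mu-1)\ge 0.
\]
Combining $E\ge 0$ with the polynomiality of $\phi_r\!\left(R_{2m}/(R_{m+1}R_m)\right)$ shows $\CatAB$ is a polynomial.

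The main obstacle is the pairing step: one must verify that, under $\mu<r/2$, the numerator factorials can always be distributed over the denominator factorials so that every resulting piece is either a Lucanomial or the single classical Catalan analogue. The role of the hypothesis is precisely to prevent the wraparound $2\mu\ge r$, which would both scramble the multiset coming from $R_{2m}$ and, through the $\binom{b}{2}$ term, drive $E$ negative --- exactly the obstruction displayed in the $r=5$ example above.
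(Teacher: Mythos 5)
Your proposal is correct and follows essentially the same route as the paper: both decompose the preimage under $s_1^{\bullet}\cdot\phi_r$ into $2\mu$ copies of $\bim{2q+1}{q}$ and $r-2\mu$ copies of $\bim{2q}{q}$, one of which absorbs the factor $\frac{1}{\{q+1\}}$ to form the classical Catalan analogue of \cite{Sagan2018}, and both verify that the net $s_1$-exponent is $\mu(\mu-1)\ge 0$ (the paper's $a^2-a$). The only difference is bookkeeping — you recompute the multisets and the exponent from $h_r$ directly rather than citing Theorem \ref{T:Matching}, Lemma \ref{T:preimage}, and the formula (\ref{E:Gamma}) — so the argument is the same.
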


\begin{proof}
Since $m \pmod{r} < \frac{r}{2}$, $m$ can be expressed as $m = dr+a$, where $0\leq a < \frac{r}{2}$.  It follows that $2m = 2dr +2a$, where $0 \leq 2a < r$.  By Theorems  \ref{T:Matching} and \ref{T:Big2Theorem} and recalling that
\[\bim{2d+1}{d+1}\subn=\bim{2d+1}{d}\subn,
\]
yields
\[\bim{2m}{m}\subn=
\bim{2dr+2a}{dr+a}\subn = s_1^{\gamma_r(2dr+2a, dr+a)}\cdot\phi_r\Biggl(\bim{2d+1}{d+1}^{2a}\cdot \bim{2d}{d}^{r-2a}\Biggr).\]

\noindent
From (\ref{E:Gamma}), $\gamma_r(2dr+2a, dr+a) = a^2$.  By Lemma \ref{T:preimage}, we have that 
\[\{m+1\}\subn=\{dr+a+1\}\subn =s_1^{a}\cdot \phi_r(\{d+1\}).\]
Therefore,
\[\frac{1}{\{dr+a+1\}\subn}\bim{2dr+2a}{dr+a}\subn  =s_1^{a^2-a}\cdot \phi_r\Biggl(\bim{2d+1}{d+1}^{2a}\cdot\frac{1}{\{d+1\}}\bim{2d}{d}^{r-2a}\Biggr).\]
Note that $a^2-a \geq 0$ and $r-2a>0$.  By Theorem 4.1 in \cite{Sagan2018}, $\frac{1}{\{d+1\}}\bim{2d}{d}$ is polynomial and thus, by Lemma \ref{T:IsPoly}, so is $\Cat_{r}(m)=\Cat_{r}(dr+a)$.
\end{proof}



%

In general, 
for large values of $m$,
if $\Cat_r(m)$
is polynomial, then so is $\Cat_r(m+r)$.
This is due to the fact that $(m+r)\pmod{r}=m \pmod{r}.$
For small values of $m$, there may be some additional cancellations that do not happen generally.

\section{Multivariable Lucanomials}

\label{S:ToInfinity}

In this section, we extend the generalization of Lucas polynomials to an arbitrarily large set of variables.
Recall from (\ref{E:varepsilon}) that $\varepsilon_r(m)= ((m-1) \pmod{r}) +1$. 
Let $\mathcal{R} =(r_1,r_2,\ldots,r_\ello)$ be a decreasing sequence of positive integers and let $\varnothing$ denote the empty sequence.
Define the \emph{$\mathcal{R}$-Lucas polynomial} $\{m\}\suboneell$ recursively by
$\{m\}_{\varnothing} = s_1^{m-1}$ and 
\begin{equation}
\{m\}_{\mathcal{R}}=\{\varepsilon_{r_1}(m)\}_{(r_2,\ldots,r_\ell)}\cdot
\phi_{r_1}\Bigl(\Bigl\{\Bigl\lceil\frac{m}{r_1}\Bigr\rceil\Bigl\}\Bigr).
\label{D:MsubR}
\end{equation}
Notice that when $\mathcal{R}=(r_1)$, then $\{m\}_{\mathcal{R}} = \{m\}_{r_1}$.  As before, define $\{m\}_{\mathcal{R}}! = \{m\}_\mathcal{R}\cdot\{m-1\}_\mathcal{R}!$ where $\{0\}_\mathcal{R} ! = 1$.  We will use the notation $\mathcal{R}'$ for the sequence $(r_2,\ldots,r_\ello)$.

Recursively, define $\Delta_{m, \mathcal{R}}$ to be the collection of tiling words of $m$ that satisfy the following:
if $\mathcal{R} = \varnothing$, then 
$\Delta_{m, \varnothing}= \{\tau_1^{m}\}$,  otherwise, use the tiles from $\{\tau_{r_1},\tau_{2r_{1}}\}$ to tile the final $m-\varepsilon_{r_1}(m)$ tiles indiscriminately and tile the first $\varepsilon_{r_1}(m)$ tiles with a tiling word from $\Delta_{\varepsilon_{r_1}(m), \mathcal{R}'}$.

\begin{lemma}\label{L:CombInterpMsubR}
For $m \geq 0$, we have
\begin{equation}
\{m+1\}_\mathcal{R}=\sum_{\rmT\in \Delta_{m,\mathcal{R}}}\omega(\rmT),
\end{equation}
\end{lemma}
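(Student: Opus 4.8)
The plan is to prove Lemma~\ref{L:CombInterpMsubR} by induction on the length $\ell$ of the sequence $\mathcal{R}$, mirroring the recursive structure of both the polynomial $\{m+1\}_\mathcal{R}$ in (\ref{D:MsubR}) and the tiling set $\Delta_{m,\mathcal{R}}$. The base case $\mathcal{R}=\varnothing$ is immediate: by definition $\{m+1\}_\varnothing = s_1^m$, while $\Delta_{m,\varnothing}=\{\tau_1^m\}$ consists of a single tiling word of weight $\omega(\tau_1^m)=s_1^m$, so the sum over $\Delta_{m,\varnothing}$ equals $s_1^m$ as required.

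For the inductive step, I would assume the statement holds for all sequences of length $\ell-1$, in particular for $\mathcal{R}'=(r_2,\ldots,r_\ell)$, and then unwind the recursion. Writing $\varepsilon=\varepsilon_{r_1}(m+1)$, the recursive definition of $\Delta_{m,\mathcal{R}}$ says that each tiling word $\rmT\in\Delta_{m,\mathcal{R}}$ factors uniquely as a prefix $\rmT'$ drawn from $\Delta_{\varepsilon,\mathcal{R}'}$ of total length $\varepsilon$ (wait---I must be careful with the off-by-one indexing: the polynomial $\{m+1\}_\mathcal{R}$ corresponds to tilings of $m$ cells, so the relevant splitting is of the $m$ cells into a first block of $\varepsilon_{r_1}(m)$-related length and a remaining block tiled by $\{\tau_{r_1},\tau_{2r_1}\}$). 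The key combinatorial observation is that the map $\rmT\mapsto(\text{prefix},\text{suffix})$ is a bijection between $\Delta_{m,\mathcal{R}}$ and the product of the prefix-tilings with the suffix-tilings, and that weight is multiplicative under concatenation, so $\sum_{\rmT\in\Delta_{m,\mathcal{R}}}\omega(\rmT)$ factors as a product of two sums.

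The two factors should then be identified with the two factors on the right-hand side of (\ref{D:MsubR}). The prefix sum equals $\sum_{\rmT'\in\Delta_{\varepsilon_{r_1}(m+1)-1,\mathcal{R}'}}\omega(\rmT')$, which by the inductive hypothesis applied to $\mathcal{R}'$ equals $\{\varepsilon_{r_1}(m+1)\}_{\mathcal{R}'}$. The suffix sum, consisting of tilings of the remaining $m+1-\varepsilon_{r_1}(m+1)$ cells by $\tau_{r_1}$ and $\tau_{2r_1}$, should be recognized---exactly as in the proof of Lemma~\ref{T:applyphi} and Lemma~\ref{T:preimage}---as $\phi_{r_1}$ applied to an ordinary two-variable Lucas tiling of $\lceil (m+1)/r_1\rceil - 1$ cells by $\tau_1,\tau_2$, since $\phi_{r_1}$ sends $\tau_1\mapsto\tau_{r_1}$ and $\tau_2\mapsto\tau_{2r_1}$ and rescales all lengths by $r_1$. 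This identifies the suffix sum with $\phi_{r_1}(\{\lceil (m+1)/r_1\rceil\})$, completing the match with (\ref{D:MsubR}).

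The main obstacle I anticipate is purely bookkeeping rather than conceptual: keeping the $m$ versus $m+1$ indexing straight throughout, and verifying precisely that $\varepsilon_{r_1}(m+1)$ is the correct prefix length so that the remaining cell count is an exact multiple of $r_1$ (this follows from the observations surrounding (\ref{E:varepsilon}), namely that $m+1-\varepsilon_{r_1}(m+1)$ is divisible by $r_1$ and the quotient is one less than $\lceil(m+1)/r_1\rceil$). Establishing the bijection and the multiplicativity of $\omega$ under concatenation is routine, so once the indexing is pinned down the inductive step reduces cleanly to the two already-proven facts about $\phi_{r_1}$ and the inductive hypothesis on $\mathcal{R}'$.
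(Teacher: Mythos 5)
The paper states Lemma \ref{L:CombInterpMsubR} without any proof, so there is no argument of the authors' to compare against; your induction on the length of $\mathcal{R}$ --- factoring each word of $\Delta_{m,\mathcal{R}}$ into a prefix handled by the inductive hypothesis and a suffix of cells tiled by $\tau_{r_1},\tau_{2r_1}$ whose weight sum is $\phi_{r_1}$ of an ordinary two-variable Lucas polynomial, using multiplicativity of $\wt$ under concatenation --- is the natural and essentially forced argument, and it is correct as you have set it up.

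One point you dismiss as ``bookkeeping'' deserves to be made explicit, because it is actually a discrepancy in the paper. Your prefix sum runs over $\Delta_{\varepsilon_{r_1}(m+1)-1,\mathcal{R}'}$, i.e.\ the prefix has length $m\bmod r_1$; the paper's recursive definition of $\Delta_{m,\mathcal{R}}$ instead prescribes a prefix of length $\varepsilon_{r_1}(m)$ drawn from $\Delta_{\varepsilon_{r_1}(m),\mathcal{R}'}$. These agree exactly when $r_1\nmid m$, but when $r_1\mid m$ one has $\varepsilon_{r_1}(m)=r_1$ while $m\bmod r_1=0$, and with the paper's literal definition the lemma fails: for $\mathcal{R}=(2)$ and $m=2$, the recursion (\ref{D:MsubR}) gives $\{3\}_{(2)}=s_2$, whereas the literal $\Delta_{2,(2)}=\{\tau_1\tau_1\}$ has weight sum $s_1^2$. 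The convention consistent with both (\ref{D:MsubR}) and the Section 2 definition of $\Delta_{m,r}$ (which uses $m\bmod r$ initial monominoes) is the one you used, so your proof establishes the intended statement; but you are silently correcting the definition of $\Delta_{m,\mathcal{R}}$ in the case $r_1\mid m$, and you should say so, since otherwise your step ``the prefix sum equals $\sum_{\rmT'\in\Delta_{\varepsilon_{r_1}(m+1)-1,\mathcal{R}'}}\omega(\rmT')$'' does not follow from the definition as written.
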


This gives a combinatorial method for computing $\{m\}_\mathcal{R}$.
\begin{Example}
Let $\mathcal{R} = (6, 2)$ and $m=18$.  The tilings corresponding to $\Delta_{17, (6, 2)}$

\begin{tikzpicture}[scale=0.3, every node/.style={scale=0.5}]
\draw[-](0,0) rectangle  (1,1); 
\draw[-](1,0) rectangle  (2,1); 
\draw[-](2,0) rectangle  (3,1); 
\draw[-](3,0) rectangle  (4,1); 
\draw[-](4,0) rectangle  (5,1); 
\draw[-](5,0) rectangle  (6,1); 
\draw[-](6,0) rectangle  (7,1); 
\draw[-](7,0) rectangle  (8,1); 
\draw[-](8,0) rectangle  (9,1); 
\draw[-](9,0) rectangle  (10,1); 
\draw[-](10,0) rectangle  (11,1); 
\draw[-](11,0) rectangle  (12,1); 
\draw[-](12,0) rectangle  (13,1); 
\draw[-](13,0) rectangle  (14,1); 
\draw[-](14,0) rectangle  (15,1); 
\draw[-](15,0) rectangle  (16,1); 
\draw[-](16,0) rectangle  (17,1); 
\mono{0}{0}

\hdom{1}{0}
\hdom{3}{0}

\hhex{5}{0}
\hhex{11}{0}
\end{tikzpicture}
\hspace{.5 cm}
\begin{tikzpicture}[scale=0.3, every node/.style={scale=0.5}]
\draw[-](0,0) rectangle  (1,1); 
\draw[-](1,0) rectangle  (2,1); 
\draw[-](2,0) rectangle  (3,1); 
\draw[-](3,0) rectangle  (4,1); 
\draw[-](4,0) rectangle  (5,1); 
\draw[-](5,0) rectangle  (6,1); 
\draw[-](6,0) rectangle  (7,1); 
\draw[-](7,0) rectangle  (8,1); 
\draw[-](8,0) rectangle  (9,1); 
\draw[-](9,0) rectangle  (10,1); 
\draw[-](10,0) rectangle  (11,1); 
\draw[-](11,0) rectangle  (12,1); 
\draw[-](12,0) rectangle  (13,1); 
\draw[-](13,0) rectangle  (14,1); 
\draw[-](14,0) rectangle  (15,1); 
\draw[-](15,0) rectangle  (16,1); 
\draw[-](16,0) rectangle  (17,1); 
\mono{0}{0}

\hdom{1}{0}
\hdom{3}{0}

\htwelve{5}{0}
\end{tikzpicture}

\bigskip
\begin{tikzpicture}[scale=0.3, every node/.style={scale=0.5}]
\draw[-](0,0) rectangle  (1,1); 
\draw[-](1,0) rectangle  (2,1); 
\draw[-](2,0) rectangle  (3,1); 
\draw[-](3,0) rectangle  (4,1); 
\draw[-](4,0) rectangle  (5,1); 
\draw[-](5,0) rectangle  (6,1); 
\draw[-](6,0) rectangle  (7,1); 
\draw[-](7,0) rectangle  (8,1); 
\draw[-](8,0) rectangle  (9,1); 
\draw[-](9,0) rectangle  (10,1); 
\draw[-](10,0) rectangle  (11,1); 
\draw[-](11,0) rectangle  (12,1); 
\draw[-](12,0) rectangle  (13,1); 
\draw[-](13,0) rectangle  (14,1); 
\draw[-](14,0) rectangle  (15,1); 
\draw[-](15,0) rectangle  (16,1); 
\draw[-](16,0) rectangle  (17,1); 
\mono{0}{0}

\hquad{1}{0}

\hhex{5}{0}
\hhex{11}{0}
\end{tikzpicture}
\hspace{.5 cm}
\begin{tikzpicture}[scale=0.3, every node/.style={scale=0.5}]
\draw[-](0,0) rectangle  (1,1); 
\draw[-](1,0) rectangle  (2,1); 
\draw[-](2,0) rectangle  (3,1); 
\draw[-](3,0) rectangle  (4,1); 
\draw[-](4,0) rectangle  (5,1); 
\draw[-](5,0) rectangle  (6,1); 
\draw[-](6,0) rectangle  (7,1); 
\draw[-](7,0) rectangle  (8,1); 
\draw[-](8,0) rectangle  (9,1); 
\draw[-](9,0) rectangle  (10,1); 
\draw[-](10,0) rectangle  (11,1); 
\draw[-](11,0) rectangle  (12,1); 
\draw[-](12,0) rectangle  (13,1); 
\draw[-](13,0) rectangle  (14,1); 
\draw[-](14,0) rectangle  (15,1); 
\draw[-](15,0) rectangle  (16,1); 
\draw[-](16,0) rectangle  (17,1); 
\mono{0}{0}

\hquad{1}{0}

\htwelve{5}{0}
\end{tikzpicture}
\\
\noindent
Thus, by Lemma \ref{L:CombInterpMsubR},
\[\{18\}_{(6,2)} =s_1s_2^2s_6^2 + s_1s_2^2s_{12} + s_1s_4s_6^2 + s_1s_4s_{12}   .\]
\end{Example}

Recursively, the generating function for $\{m\}_\mathcal{R}$ is given by
\begin{equation}
    L_\mathcal{R}(x)=\frac{\hat L_{\mathcal{R}',r_1}(x)}
    {1-s_{r_1}x^{r_1}-s_{2r_1}x^{2r_1}}
    \label{E:GenFuncRec}
\end{equation}
where
$\hat L_{\mathcal{R}',r_1}(x)$
is the polynomial that corresponds to the partial sum 
of the terms of degree $r_1$ and smaller in
 $L_{\mathcal{R}'}(x)$ and $L_{r}(x)$ is
given in (\ref{E:Recursion}).
The differences and similarities of this generating function
with that given
in (\ref{D:LucasGenFunct})
are worth noting.

\begin{Example}
The generating function for $\{m\}_{(5,2)}$ is given by
\begin{equation*}
L_{(5,2)}(x)=
\frac{x+s_1x^2+s_2x^3+s_1s_2x^4+(s_2^2+s_4)x^5
}
{1-s_5 x^{5}-s_{10}x^{10}}
\end{equation*}
since
$$L_2(x)=x+s_1x^2+s_2x^3+s_1s_2x^4+(s_2^2+s_4)x^5+\cdots.$$
\end{Example}

Properties of the $\mathcal{R}$-Lucas polynomials allow for a simple formula for the expansion of $\{m\}_{\mathcal{R}}!$.
In particular, we have the following.

\begin{lemma}
For $m \geq 1$ and $\mathcal{R} \neq \varnothing$, we have
\begin{equation}
\{m\}_\mathcal{R}!
=\Bigl(\{r_1\}_{\mathcal{R}'}!\Bigr)^{\bigl\lfloor\frac{m}{r_1}\bigr\rfloor}
\cdot
\{\varepsilon_{r_1}(m)\}_{\mathcal{R}'}!
\cdot
\phi_{r_1}
\Bigl(\prod_{j=0}^{r_1-1}
\Bigl\{\Bigl\lceil\frac{m-j}{r_1}\Bigr\rceil\Bigr\}!\Bigr).
\label{E:multfacexpan}
\end{equation}
\label{L:msubRfact}
\end{lemma}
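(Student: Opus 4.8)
The plan is to prove the identity in a single reduction step, directly from the product definition of the factorial together with the recursion (\ref{D:MsubR}); no induction on the length $\ell$ of $\mathcal{R}$ is needed, since the right-hand side of (\ref{E:multfacexpan}) is already expressed entirely in terms of $\mathcal{R}'$-data together with an application of $\phi_{r_1}$ to ordinary Lucas factorials. First I would write $\{m\}_{\mathcal{R}}! = \prod_{i=1}^{m}\{i\}_{\mathcal{R}}$ and substitute $\{i\}_{\mathcal{R}} = \{\varepsilon_{r_1}(i)\}_{\mathcal{R}'}\cdot\phi_{r_1}(\{\lceil i/r_1\rceil\})$ from (\ref{D:MsubR}). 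Because $\phi_{r_1}$ is a ring homomorphism, the product factors cleanly as
\[
\{m\}_{\mathcal{R}}! = \Bigl(\prod_{i=1}^{m}\{\varepsilon_{r_1}(i)\}_{\mathcal{R}'}\Bigr)\cdot\phi_{r_1}\Bigl(\prod_{i=1}^{m}\{\lceil i/r_1\rceil\}\Bigr),
\]
and it remains to evaluate the two products separately.

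For the $\phi_{r_1}$-factor I would group the indices $i\in\{1,\dots,m\}$ according to the value $k=\lceil i/r_1\rceil$. Each value $k$ is attained by a block of $r_1$ consecutive indices (with the topmost block possibly short), which is precisely the bookkeeping already performed in Lemma \ref{T:preimage} and Lemma \ref{T:applyphi} for the ordinary Lucas factorial. That argument establishes $\prod_{i=1}^{m}\{\lceil i/r_1\rceil\} = \prod_{j=0}^{r_1-1}\{\lceil (m-j)/r_1\rceil\}!$, so the $\phi_{r_1}$-factor is exactly the third factor of (\ref{E:multfacexpan}). I would cite Lemma \ref{T:applyphi} here rather than redo the column count.

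For the first factor I would exploit the periodicity of $\varepsilon_{r_1}$: as $i$ runs from $1$ to $m$, the sequence $\varepsilon_{r_1}(1),\dots,\varepsilon_{r_1}(m)$ repeats the block $1,2,\dots,r_1$ exactly $\lfloor m/r_1\rfloor$ times and then lists a final initial segment $1,2,\dots,(m\bmod r_1)$. Each full block contributes $\prod_{j=1}^{r_1}\{j\}_{\mathcal{R}'} = \{r_1\}_{\mathcal{R}'}!$, giving the factor $(\{r_1\}_{\mathcal{R}'}!)^{\lfloor m/r_1\rfloor}$, while the final segment contributes $\{(m\bmod r_1)\}_{\mathcal{R}'}!$. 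Assembling the three pieces is then intended to yield (\ref{E:multfacexpan}).

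The step I expect to be the main obstacle is reconciling the tail of the $\varepsilon_{r_1}$-product with the stated factor $\{\varepsilon_{r_1}(m)\}_{\mathcal{R}'}!$. For $r_1\nmid m$ this is immediate, since $\varepsilon_{r_1}(m)=m\bmod r_1$ and the final partial cycle contributes exactly $\{\varepsilon_{r_1}(m)\}_{\mathcal{R}'}!$. The genuinely delicate case is $r_1\mid m$: there the partial cycle is empty, so it contributes $\{0\}_{\mathcal{R}'}!=1$, whereas $\varepsilon_{r_1}(m)=r_1$. I would therefore treat $r_1\mid m$ as a separate case, verifying carefully how the empty tail, the convention $\{0\}_{\mathcal{R}'}!=1$, and the exponent $\lfloor m/r_1\rfloor$ fit together; this is exactly the point where $\lfloor m/r_1\rfloor$ and the full-cycle count can differ by one, and it is where I would concentrate the verification to ensure the two sides of (\ref{E:multfacexpan}) agree.
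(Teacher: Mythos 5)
Your decomposition is the same one the paper uses: partition $[1,m]$ into $\bigl\lfloor\frac{m}{r_1}\bigr\rfloor$ full blocks $[gr_1+1,(g+1)r_1]$ plus a tail, read off a factor of $\{r_1\}_{\mathcal{R}'}!$ from each full block and $\{\varepsilon_{r_1}(m)\}_{\mathcal{R}'}!$ from the tail, and obtain the $\phi_{r_1}$ factor from the column count of Lemma \ref{T:applyphi}. Your write-up is more explicit than the paper's, but it is the identical argument.

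The case you defer, $r_1\mid m$, cannot be closed by more careful verification: with the definitions as given, the identity fails there. Take $\mathcal{R}=(2)$ and $m=2$. Then $\{1\}_{(2)}=1$ and $\{2\}_{(2)}=s_1$, so $\{2\}_{(2)}!=s_1$; but the right-hand side of (\ref{E:multfacexpan}) is
\[
\bigl(\{2\}_{\varnothing}!\bigr)^{1}\cdot\{\varepsilon_2(2)\}_{\varnothing}!\cdot\phi_2\bigl(\{1\}!\,\{1\}!\bigr)=s_1\cdot s_1\cdot 1=s_1^2,
\]
since $\varepsilon_2(2)=2$ and $\{2\}_{\varnothing}!=s_1$. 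Exactly as you observed, when $r_1\mid m$ the tail block is empty and contributes $\{0\}_{\mathcal{R}'}!=1$, while the stated formula charges it $\{r_1\}_{\mathcal{R}'}!$; equivalently, specializing to $\mathcal{R}=(r)$, the exponent of $s_1$ produced by the first two factors of (\ref{E:multfacexpan}) is $\lfloor m/r\rfloor\tfrac{r(r-1)}{2}+\tfrac{\varepsilon(\varepsilon-1)}{2}$, which exceeds the $h_r(m)$ of Lemma \ref{T:applyphi} by $\tfrac{r(r-1)}{2}$ whenever $r\mid m$. The paper's own proof has the same blind spot: its ``remaining interval'' $[\lfloor m/r_1\rfloor r_1+1,m]$ is empty when $r_1\mid m$, so the argument as written only establishes the formula for $r_1\nmid m$. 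The statement is repaired by replacing the exponent $\bigl\lfloor\frac{m}{r_1}\bigr\rfloor$ with $\bigl\lceil\frac{m}{r_1}\bigr\rceil-1=(m-1)(\ddiv\ r_1)$, which agrees with $\bigl\lfloor\frac{m}{r_1}\bigr\rfloor$ precisely when $r_1\nmid m$ and counts the full cycles of $\varepsilon_{r_1}$ correctly in the divisible case as well; with that correction your argument (and the paper's) goes through uniformly.
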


\begin{proof}
The interval $[1, m]$ can be partitioned into $\bigl\lfloor\frac{m}{r_1}\bigr\rfloor$ intervals of the form $[gr_1+1,(g+1)r_1]$ and one interval of the form $[\lfloor\frac{m}{r_1}\rfloor\cdot r_1+1, m]$.
From (\ref{D:MsubR}), each of the $\bigl\lfloor\frac{m}{r_1}\bigr\rfloor$ intervals of the form $[gr_1+1,(g+1)r_1]$ 
contributes a factor of $\{r_1\}_{\mathcal{R}'}!$.  
The remaining interval
contributes the factor of
$\{\varepsilon_{r_1}(m)\}_{\mathcal{R}'}!$.  
The term
$$
\phi_{r_1}\Biggl(\Bigl\{\Bigl\lceil\frac{m-j}{r_1}\Bigr\rceil\Bigr\}\Biggr)
$$
is the analogue from Lemma \ref{T:applyphi}.
\end{proof}

We will now consider the binomial analogues of the $\mathcal{R}$-Lucas polynomials which is defined by 

\begin{equation}
 \bim{m}{n}\suboneell=
 \frac{\{m\}\suboneell!}{\{n\}\suboneell!\cdot\{m-n\}\suboneell!}.  
 \label{E:MchooseNsubR}
\end{equation}

\begin{Example}
Suppose $(r_1,r_2)=(9,3)$,
$m=52$, $n=31$, and thus $m-n=21.$
Then by Lemma \ref{L:msubRfact}, we have
\begin{align}
\{52\}_{(9,3)}!&=\Bigl(\{9\}_3!\Bigr)^5\cdot\{7\}_3!\cdot
\phi_9\Biggl(\Bigl(\{6\}!\Bigr)^7\cdot\Bigl(\{5\}!\Bigr)^2\Biggr),
\notag\\
\{31\}_{(9,3)}!&=\Bigl(\{9\}_3!\Bigr)^3\cdot\{4\}_3!\cdot
\phi_9\Biggl(\Bigl(\{4\}!\Bigr)^4\cdot\Bigl(\{3\}!\Bigr)^5\Biggr), \text{and}
\notag\\
\{21\}_{(9,3)}!&=\Bigl(\{9\}_3!\Bigr)^2\cdot\{3\}_3!\cdot
\phi_9\Biggl(\Bigl(\{3\}!\Bigr)^3\cdot\Bigl(\{2\}!\Bigr)^6\Biggr).
\notag
\end{align}
With the matching from Theorems \ref{T:Matching} and \ref{T:Big2Theorem}, we have
\begin{align}
\bim{52}{31}_{(9,3)}=&\bim{7}{4}_3\cdot
\phi_9\Biggl(
\bim{6}{3}^3
\cdot\bim{6}{4}^4
\cdot\bim{5}{3}^2
\Biggr)\notag\\
=
&s_1^0\cdot\phi_3\Biggl(\bim{3}{2}
\cdot\bim{2}{1}^2\Biggr)
\cdot\phi_9\Biggl(
\bim{6}{3}^3
\cdot\bim{6}{4}^4\notag
\cdot\bim{5}{3}^2\Biggr).
\end{align}
Note that $\bim{52}{31}_{(9,3)}$ is a polynomial in the five variables
$\{s_1,s_{3},s_{6},s_{9},s_{18}\}.$
\end{Example}

For $1\leq i \leq \ello$, let $\nu_1=m$,
$\nu_{i+1}=\varepsilon_{r_i}(\nu_{i})$,
$\alpha_1=n$, $\alpha_{i+1}=\varepsilon_{r_i}(\alpha_{i}),$ 
 $\beta_1=m-n$ and $\beta_{i+1}=\varepsilon_{r_i}(\beta_{i})$.
As can be seen in the previous example, 
the exponent of $\{9\}_3!$ in the numerator is greater than or equal to the exponent of $\{9\}_3!$ in the denominator.  
If
$\nu_{i}=\alpha_{i}+\beta_{i}$ 
and $\nu_i,\alpha_i,\beta_i\ge 0$, then
\begin{equation}
\Bigl\lfloor\frac{\nu_i}{r_i}\Bigr\rfloor
=\Bigl\lfloor\frac{\alpha_i+\beta_i}{r_i}\Bigr\rfloor
\ge
\Bigl\lfloor\frac{\alpha_i}{r_i}\Bigr\rfloor
+
\Bigl\lfloor\frac{\beta_i}{r_i}\Bigr\rfloor.
\label{e: floor}
\end{equation}
This ensures the cancellation like that of the previous example.

We will now give sufficient conditions for when the binomial analogues of the $\mathcal{R}$-Lucas polynomials is polynomial.

\begin{theorem}
If
$\nu_i=\alpha_i+\beta_i$ for $1\le i\le \ello+1$,
then 
\begin{equation*}
\bim{m}{n}\suboneell=
 \frac{\{m\}\suboneell!}{\{n\}\suboneell!\cdot\{m-n\}\suboneell!}  
\end{equation*}
is polynomial.
\label{T:multivariable}
\end{theorem}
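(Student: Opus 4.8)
The plan is to argue by induction on the length $\ello$ of $\mathcal{R}$. For the base case $\mathcal{R}=\varnothing$, the definition $\{k\}_\varnothing=s_1^{k-1}$ gives $\{m\}_\varnothing!=s_1^{\binom{m}{2}}$, so $\bim{m}{n}_\varnothing=s_1^{\binom{m}{2}-\binom{n}{2}-\binom{m-n}{2}}=s_1^{n(m-n)}$, a monomial; here the hypothesis reduces to the tautology $\nu_1=\alpha_1+\beta_1$, which is why it imposes no real constraint at the bottom level.

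For the inductive step I would assume the theorem for all sequences of length $\ello-1$ and apply Lemma \ref{L:msubRfact} to expand each of $\{m\}\suboneell!$, $\{n\}\suboneell!$, and $\{m-n\}\suboneell!$. Forming the quotient (\ref{E:MchooseNsubR}) then factors $\bim{m}{n}\suboneell$ as a product of three pieces: a power $\bigl(\{r_1\}_{\mathcal{R}'}!\bigr)^{e}$ with $e=\lfloor m/r_1\rfloor-\lfloor n/r_1\rfloor-\lfloor(m-n)/r_1\rfloor$; the ratio $\{\nu_2\}_{\mathcal{R}'}!\big/\bigl(\{\alpha_2\}_{\mathcal{R}'}!\,\{\beta_2\}_{\mathcal{R}'}!\bigr)$, where $\nu_2=\varepsilon_{r_1}(m)$, $\alpha_2=\varepsilon_{r_1}(n)$, $\beta_2=\varepsilon_{r_1}(m-n)$; and $\phi_{r_1}\bigl(R_m/(R_nR_{m-n})\bigr)$, with $R_m=\prod_{j=0}^{r_1-1}\{\lceil(m-j)/r_1\rceil\}!$ and $R_n$, $R_{m-n}$ defined analogously.

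I would then show each piece is polynomial. The power is polynomial because (\ref{e: floor}) with $i=1$ forces $e\ge 0$ and $\{r_1\}_{\mathcal{R}'}!\in\ZS$. The factor $\phi_{r_1}\bigl(R_m/(R_nR_{m-n})\bigr)$ is handled entirely by the two-variable machinery: the multisets of ceilings appearing in $R_m$, $R_n$, $R_{m-n}$ are exactly the multisets $M$, $N$, $K$ of Theorem \ref{T:Matching} with $r=r_1$, so the matching $M_i=N_i+K_i$ rewrites the argument as a product $\prod_i\bim{M_i}{N_i}$ of ordinary Lucanomials, polynomial in $\mathbb{Z}[\s,\t]$; Lemma \ref{T:IsPoly} then makes its $\phi_{r_1}$-image polynomial (equivalently this factor equals $s_1^{-\gamma_{r_1}(m,n)}\bim{m}{n}_{r_1}$ by Theorem \ref{T:Big2Theorem}), and notably this needs no part of the hypothesis beyond $m=n+(m-n)$. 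The remaining ratio is the crux: the $i=2$ case of the hypothesis, $\nu_2=\alpha_2+\beta_2$, lets me identify it with the genuine $\mathcal{R}'$-binomial $\bim{\nu_2}{\alpha_2}_{\mathcal{R}'}$, and a short check shows the sequences associated to $\mathcal{R}'$ starting from $(\nu_2,\alpha_2,\beta_2)$ are precisely the shifted sequences $(\nu_{i+1},\alpha_{i+1},\beta_{i+1})$. Hence the hypotheses for $2\le i\le\ello+1$ are exactly those required to invoke the induction hypothesis, which makes this factor polynomial, and the product of the three factors finishes the step.

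I expect the principal difficulty to be bookkeeping rather than any new idea: correctly reading off the three factors from Lemma \ref{L:msubRfact}, and—most delicately—verifying the index shift that turns the tail $2\le i\le\ello+1$ of the hypothesis into the full hypothesis for $\mathcal{R}'$. Since the $i=1$ instance is automatic, the genuine force of the assumption is exerted one level down in the recursion, so keeping the interaction between $\varepsilon_{r_1}$ and the recursion on $\mathcal{R}'$ straight is where care is most needed.
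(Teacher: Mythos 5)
Your proposal is correct and follows essentially the same route as the paper, which simply cites Lemma \ref{L:msubRfact}, inequality (\ref{e: floor}), and Theorems \ref{T:Matching} and \ref{T:Big2Theorem} as "immediately" yielding the result; your induction on $\ello$ merely makes explicit the recursion that the paper leaves implicit. The identification of the three factors, the role of the $i\ge 2$ hypotheses as the induction hypothesis for $\mathcal{R}'$, and the index shift are all exactly the intended argument.
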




\begin{proof}

The combination of Theorem \ref{T:Matching} and Theorem \ref{T:Big2Theorem} along with (\ref{E:multfacexpan}) and (\ref{e: floor}) immediately yields the result.
\end{proof}


%

The condition  that $\nu_i=\alpha_i+\beta_i$ is sufficient but not necessary for the corresponding polynomiality.

\begin{Example}
One can see that $\bim{76}{50}_{(15,5)}$ is polynomial even though
$\nu_2=1$, $\alpha_2=5$ and $\beta_2=11.$
In this particular case, everything cancels normally except for the  term
$$\frac{\{1\}_5\cdot\{15\}_5!}{\{5\}_5!\cdot\{11\}_5!}$$
which does in fact turn out to be polynomial. 
This corresponds to a special case of the form  $\alpha_2+\beta_2=r_{1}+1$ in which $\alpha_2\le r_2$.
\end{Example}

\section{Further Comments, Applications and Conclusions}


 



It is not hard to match up the equivalence classes $\hskip -6pt\pmod{r}$
of the integers so that the product of Lucanomials
in Theorem \ref{T:Matching}
get mapped to different sets of variables with different rules for tilings, left-right paths, or substitutions.
This would allow, for example, the multiplication of Fibbonnaci binomials with the $q$-binomials.
For such an application, however, one would need to be careful to match up the integers so that the corresponding substitutions are consistent with the matching of $M^*$, $N^*$ and $K^*$ given in Theorem \ref{T:Matching}.

\begin{Example}

With $S_i=\{s_{i,1},s_{i,2},\ldots\}$,
define $\Upsilon_{i}:\mathbb{Z}[s_r,s_{2r}]\rightarrow\mathbb{Z}[s_{i,r},s_{i,2r}]$ for $1\le i \le r$ by $\Upsilon_i(s_j)=s_{i,j}.$
Then apply $\Upsilon_i$ to the $i^{th}$ term in the 
binomial expansion. 
For example, (\ref{E:binexp}) would become
\begin{align}
&s_1\cdot\Upsilon_{1}\circ\phi_3\Biggl(\left\{
{\arraycolsep=1.4pt
\begin{array}{c}4\\2\end{array}}
\right\}\Biggr)
\cdot \Upsilon_{2}\circ\phi_3\Biggl(\left\{
{\arraycolsep=1.4pt
\begin{array}{c}3\\2\end{array}}\right\}\Biggr)
\cdot \Upsilon_{3}\circ\phi_3\Biggl(\left\{
{\arraycolsep=1.4pt
\begin{array}{c}3\\2\end{array}}
\right\}\Biggr)\notag\\
=&s_1\cdot\Upsilon_1\circ\phi_3\Bigl(\s^4+3\s^2\t+2\t^2\Bigr)
\cdot \Upsilon_2\circ\phi_3\Bigl(\s^2+\t\Bigr)
\cdot \Upsilon_3\circ\phi_3\Bigl(\s^2+\t\Bigr).\notag\notag
    \end{align}
    \end{Example}

The recursion formula for generating functions of $\{m\}_\mathcal{R}$ given in (\ref{E:GenFuncRec}) allows for a wide assortment of rational functions of the form
$\frac{Q(x)}{P(x)}$.  Classifying these rational functions into strict rules as to which Catalan analogues are polynomial would be a potential next step.


In this work, we present methods for defining multivariable Lucas polynomials.
It is worth pointing out that at no point in our work did we ever use properties
of Lucas polynomials other than
the the fact that their binomial and Catalan analogues are polynomial.  
Thus, this work has applications to other
collections of polynomials associated with integers whose binomial and Catalan analogues are polynomial.

\bibliography{references.bib} 

\end{document}